\newcommand{\C}{\mathbb{C}}
\newcommand{\Z}{\mathbb{Z}}
\newcommand{\mF}{\mathcal{F}}
\newcommand{\PP}{\mathbb{P}}
\newcommand{\zt}{\widehat{Z}}
\newcommand{\E}{\mathcal{E}}
\newcommand{\mU}{\mathcal{U}}
\newcommand{\mL}{\mathcal{L}}
\newcommand{\hN}{\widehat{N}}
\newcommand{\of}{\mathcal{O}}
\newcommand{\de}{\partial}
\newcommand{\W}{\bigwedge}
\newcommand{\s}{\mathfrak{sl}}
\DeclareMathOperator{\prim}{prim}
\DeclareMathOperator{\Ext}{Ext}
\DeclareMathOperator{\Sym}{Sym}
\DeclareMathOperator{\ddim}{dim}
\DeclareMathOperator{\Gr}{Gr}
\DeclareMathOperator{\HP}{HP}
\DeclareMathOperator{\van}{van}
\newtheorem{thm}{Theorem}[section]
\newtheorem{corollary}[thm]{Corollary}
\newtheorem{lemma}[thm]{Lemma}
\newtheorem{proposition}[thm]{Proposition}
\newtheorem{definition}[thm]{Definition}
\newtheorem{conj}[thm]{Conjecture}
\newenvironment{sistema}
{\left\lbrace\begin{array}{@{}l@{}}}
{\end{array}\right.}
\def\l@subsection{\@tocline{1}{0,2pt}{2pc}{8mm}{\ \ }} 
\def\l@section{\@tocline{1}{0,2pt}{2pc}{8mm}{\ \ }} 
\author{Enrico Fatighenti }
\address{Institut de math\'ematiques de Toulouse \\
   Universit\'e Paul Sabatier\\
118 route de Narbonne, 31062 Toulouse, France}
\email[E.~Fatighenti]{efatighe@math.univ-toulouse.fr}
\author{Giovanni Mongardi}
\address{Dipartimento di Matematica \\
Alma Mater Studiorum - Università di Bologna\\
Piazza di Porta San Donato 5, 40126 Bologna, Italia}
\email[G.~Mongardi]{giovanni.mongardi2@unibo.it}
\title[Griffiths ring for C.I. in Grassmannians]{A note on a Griffiths-type ring for complete intersections in Grassmannians}
\begin{document}
\maketitle
\begin{abstract}
We calculate a Griffiths-type ring for smooth complete intersections in Grassmannians. This is the analogue of the classical Jacobian ring for complete intersections in projective space and allows us to explicitly compute their Hodge groups.\end{abstract}

\section{Introduction}
Griffiths' theory of residues is a powerful tool in algebraic geometry. It identifies the Hodge groups of a smooth projective hypersurface $X$ with some special homogeneous slices of a graded ring, the \emph{Jacobian ring} associated with the defining equation of $X$ (see \cite{griffiths} for the original result). Its very explicit nature has led to proofs of several well-known  theorems, for example, the Torelli theorem or the Noether-Lefschetz theorem in some special cases, including e.g. threefolds.

This result has been generalised to the case of complete intersection in toric varieties, thanks to the work of Batyrev and Cox, Dimca, Konno, Mavlyutov, and many others, (\cite{cox}, \cite{dimca1994residues}, \cite{konno}, \cite{mav}). In this case, the generalised Jacobian ring is as explicit as in the hypersurface case (given in terms of generators and relations). Another generalisation was subsequently given by Green in \cite{green}, who investigated the case of hypersurfaces of sufficiently high degrees in an arbitrary variety. However, the latter was less explicit than the former case. In a very recent work which was developed in parallel with ours, Huang--Lian--Yau--Yu (\cite{yau}) generalised Green's description to zero loci of homogeneous vector bundles. Their approach and ours share many techniques, which were also used in the first author's PhD thesis (\cite{thesis}). However, they differ both in the scope of the results, which hold in greater generality in \cite{yau}, and in the explicit computability of Hodge structures, which is explained in greater detail in the present paper.
 In a preliminary chapter, we go through a brief summary of some of the literature in the topic.  
The purpose of this paper is to construct explicitly a Jacobian-type ring for complete intersections in Grassmannians. 
In particular, in what we consider to be the core result, we give a presentation (in terms of generators and relations) of the \emph{Griffiths ring} that plays the role of the classical Jacobian ring in this context. In particular, we produce a simple recipe to explicitly write down the Jacobian-type ideal, echoing  the original spirit of the work of Griffiths. We present as well several meaningful examples in Sections 3.1, 4.1 and 4.2 to show in detail how the computations can be done. These computations can be easily replicated for any other example, either by hand or by using computer--algebra software such as Macaulay2 \cite{Mac2}.

 When $X_d \subset \Gr(k,n)$ is a smooth hypersurface we define the \emph{Griffiths ring} $R^G_f$ as in Definition \ref{jacobian} and $I_{p-1,p}$ as in Definition \ref{contr}. Our first result is the following.

\begin{thm}\label{residueGrass} Let $X_d$ be a smooth hypersurface in the Grassmannian $G=\Gr(k,n)$. Set $N:=\ddim( G)=k(n-k)$, and define $R^G_f$ the Griffiths ring for $X$ as defined in Definition \ref{jacobian}. Assume that $d \geq n-1$. If $\ddim(X) =N-1 \equiv 0 \ (2)$, then $$ [R_f^G]_{(p+1)d-n} \cong H^p_{\van}(X, \Omega^{N-1-p}).$$
If $\ddim(X) =N-1 \equiv 1 \ (2)$, then $$ [R_f^G]_{(p+1)d-n} \cong H^p_{\van}(X, \Omega^{N-1-p}) \oplus \delta_{p, \frac{N}{2}}I_{p-1,p},$$ where $\delta_{p, \frac{N}{2}}$ is the Kronecker delta symbol.
\end{thm}

We discuss the case $d \leq n-2$ and give some explicit formulae for the Grassmannian of lines as well. \\
In the case of $Z$ a complete intersection $Z=Z_{d_1,\ldots, d_c} \subset \Gr(k,n)$, we use a Cayley trick to produce a specific version of the Griffiths ring $\mU$ as in \eqref{griffithsring1}.  Set $m:= \sum d_i -n$ , so that $\omega_Z \cong \of_Z(m)$. Our result is then the following.

\begin{thm} \label{grci} Let $Z$ be a smooth complete intersection in a Grassmannian Gr(k,n), and let $\mU$ be the Griffiths ring attached to $Z$. Suppose $m \geq -1$. Then if $ \ddim(Z)=N-c$ is even $$ \mU_{p, m} \cong H^{N-c-p,p}_{\van}(Z).$$ If  $ \ddim(Z)=N-c$ is odd $$ \mU_{p, m} \cong H^{N-c-p,p}_{\van}(Z) \oplus \delta_{p, \frac{N-c}{2}}I_{p,p-1}(G).$$
\end{thm}
We give several significative examples, such as Fano 5-folds and 4-folds of genus 6 and degree 10, and a Calabi-Yau section of the Grassmannian $\Gr(2,7)$. We conclude with an appendix on Fano varieties of K3 type: we intend this as the beginning of a classification project that we plan to develop in a series of future works.
\subsubsection*{Notation}
If $V_n$ is a $\C$-vector space of dimension $n$, we denote by $\Gr(k,V_n)=\Gr(k,n)$ the Grassmannian of k-planes in $V_n$ (sometimes in longer formulae -- and whenever there is no risk of confusion -- we will denote it by $G$). We will denote by $N:=k(n-k)$ the dimension of $\Gr(k,n)$. Denote by $\of_G(1)$ the ample generator of $\mathrm{Pic}(\Gr(k,n)) \cong \Z$: we have then $\omega_G \cong \of_G(-n)$.  We will always assume that $k \neq 1, n-1$: this is motivated by substantial differences in the theory between projective space and Grassmannian varieties.\\
Denote by $$S= \bigoplus_{a \geq 0} S_a, \ \ S_a=H^0(\Gr(k,n), \of_G(a))$$ the homogeneous coordinate ring of the Grassmannian; in particular, a hypersurface $X$ of degree $d$ will be given by the vanishing of a $f \in S_d$.
Throughout the whole paper, $X_{d_1, \ldots, d_c} \subset \Gr(k,n)$ will denote a complete intersection of multidegree $(d_1, \ldots, d_c)$ in the Grassmannian $\Gr(k,n)$. All varieties are assumed to be smooth and projective. We work over $\C$.


\subsubsection*{Acknowledgments}
The authors wish to thank Enrico Arbarello for useful comments and suggestions, Camilla Felisetti and Luca Migliorini for their support.
Many of these results appeared as well in the PhD thesis of the first author (\cite{thesis}). We would also like to thank Miles Reid, Christian B\"ohning and Alessio Corti for discussions, insights and suggestions. EF was supported by MIUR-project FIRB 2012 ``Moduli spaces and their applications.'' GM was supported by ``Progetto di ricerca INdAM per giovani ricercatori: Pursuit of IHS.'' Both authors are member of the INDAM-GNSAGA.\\ 
While completing this paper, we learned that An Huang, Bong Lian, Shing-Tung Yau, and Chenglong Yu obtained similar results in an independent way; see \cite{yau} and the subsequent \cite{yau2}. We believe that the two papers together complete each other and we invite the interested reader to check both of them. We thank in particular Shing-Tung Yau for his nice comments on our work.
\section{Preliminaries}
\subsection{Original Griffiths theory and link with deformations of affine cones}
If $X$ is a smooth projective hypersurface, Griffiths' theory of residues explicitly determines the Hodge structures of $X$ in terms of the coordinate ring of $X$. The result is classic. For a general overview, we refer to \cite[6.10]{voisin2}.
\begin{thm} \label{res} The $n$-dimensional vanishing
    Hodge structure of a degree $d$ smooth projective hypersurface $X \subset
    \PP^{n+1}$ is given by the isomorphism 
    \[
    H^{n-p+1, p-1}_{\van}(X) \cong (\C[x_0, \ldots, x_{n+1}]/ J_f)_{pd-n-2}
    \]
    where $J_f$ is the ideal spanned by all the partial derivatives
    $(f_0, \ldots, f_{n+1})$  of $f$.
  \end{thm}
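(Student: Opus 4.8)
The plan is to follow Griffiths' original strategy: realize the vanishing cohomology of $X$ as residues of rational $(n+1)$-forms on $\PP^{n+1}$ with poles along $X$, and then translate the pole-order filtration into the Hodge filtration.

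\emph{Setting up residues.} Because $X$ is a hypersurface, $U:=\PP^{n+1}\setminus X$ is affine, so by Grothendieck's algebraic de Rham theorem $H^{n+1}(U,\C)$ is computed by global algebraic forms on $U$. After clearing denominators and discarding an exact form, every class is represented by
\[
\omega_P \;=\; \frac{P\,\Omega}{f^{q}}, \qquad \Omega \;=\; \sum_{i=0}^{n+1}(-1)^{i}\,x_i\,dx_0\wedge\cdots\wedge\widehat{dx_i}\wedge\cdots\wedge dx_{n+1},
\]
with $P$ homogeneous; the requirement that $\omega_P$ descend to $\PP^{n+1}$ forces $\deg P = qd-n-2$. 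The Gysin sequence of the pair $(\PP^{n+1},X)$ identifies $H^{n+1}(U)\cong H^{n}_{\van}(X)$ (up to a Tate twist) via the residue map $\Res$, since $H^{\bullet}(\PP^{n+1})$ contributes only the non-vanishing part.

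\emph{Pole order versus the Hodge filtration.} Griffiths' key observation is that $\Res(\omega_P)\in F^{n-q+1}H^{n}_{\van}(X)$ whenever $\omega_P$ has a pole of order at most $q$, and that the induced map on the associated graded,
\[
\{P : \deg P = qd-n-2\}\;\longrightarrow\;\operatorname{Gr}^{n-q+1}_{F}H^{n}_{\van}(X)\;=\;H^{n-q+1,\,q-1}_{\van}(X),
\]
is surjective. Surjectivity onto all of $F^{n-q+1}$ follows by descending induction on $q$, using that $H^{n+1}(U)$ is exhausted by the forms $\omega_P$ together with the pole-reduction step below.

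\emph{The Jacobian ideal is the kernel.} The pole-reduction identity says that if $P=\sum_{i=0}^{n+1}A_i f_i$ lies in the Jacobian ideal $J_f$, then $\omega_P$ is cohomologous to a form with a pole of order $q-1$; conversely, if $\omega_P$ has pole order exactly $q$ but $\Res(\omega_P)\in F^{n-q+2}$, then $P\in J_f$. Here smoothness of $X$ is essential: it makes $(f_0,\dots,f_{n+1})$ a regular sequence, so that the Koszul-type complex computing these graded pieces is exact. Consequently the $q$-th graded piece of the pole filtration is exactly $(\C[x_0,\dots,x_{n+1}]/J_f)_{qd-n-2}$, and combining with the previous step gives $H^{n-q+1,\,q-1}_{\van}(X)\cong(\C[x_0,\dots,x_{n+1}]/J_f)_{qd-n-2}$; renaming $q$ as $p$ is the assertion.

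\emph{Main obstacle.} The delicate point is the surjectivity above — that the pole-order filtration on $H^{n+1}(U)$ maps onto the \emph{entire} Hodge filtration $F^{\bullet}H^{n}_{\van}(X)$ rather than just a sub-quotient of it. This comes down to the vanishing $H^{j}(\PP^{n+1},\Omega^{i}_{\PP^{n+1}}(kX))=0$ in the relevant ranges of $(i,j,k)$ (a form of Bott vanishing on projective space), together with the regular-sequence property of the partials. The remaining ingredients — the Gysin/residue sequence and the explicit pole-reduction formula — are direct, if slightly fiddly, computations with rational differential forms.
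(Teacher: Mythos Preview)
Your outline is the classical Griffiths argument and is correct in substance: represent $H^{n+1}(U,\C)$ by rational forms $P\Omega/f^{q}$, identify the pole-order filtration with the Hodge filtration via Bott vanishing on $\PP^{n+1}$, and use the pole-reduction identity together with the regularity of $(f_0,\dots,f_{n+1})$ to see that the kernel on each graded piece is exactly $(J_f)_{qd-n-2}$.

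There is nothing to compare against, however: in the paper this theorem appears in the Preliminaries as a quoted classical result attributed to Griffiths, and no proof is given there. The paper's own contributions (Lemma~\ref{t1jacobi}, Lemma~\ref{lemmavanishing}, Theorem~\ref{residueGrass}) concern the Grassmannian analogue and proceed instead via the identification $R^G_f\cong T^1_{A_X}(-d)$ and diagram-chasing with the tangent/normal sequences, rather than via residues of rational forms on the complement. So your write-up is a faithful sketch of Griffiths' original method, which the paper invokes but does not reproduce.
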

  
 The ideal $J$ is often called in the literature the \emph{Jacobian ideal}. The notation $H^n_{\van}(X)$ (and similarly for the $(p,q)$ part) will denote the vanishing subspace of the cohomology group, see \cite[2.3.3]{voisin2} for a definition. In the literature, the result is often phrased in terms of \emph{primitive cohomology}. This is because for a smooth projective hypersurface primitive and vanishing cohomology agrees. The \emph{Jacobian ring} $\C[x_0, \ldots, x_{n+1}]/ J_f$ of a smooth projective hypersurface coincides moreover with  $T^1_{A_X}$, the infinitesimal first-order deformation module of the affine cone $A_X$ over $X$. The latter is a classical object in algebraic geometry and deformation theory. We refer to \cite{io} for an overview of its properties that are needed in the present paper. When $X$ is an arbitrary smooth projective variety of codimension $c$ there is a link between the deformations of $A_X$ and the Hodge theory of $X$. An assumption needed for the result is the subcanonicality property for $X$, that is we can write $\omega_X$ as $\of_X(c)$, some $c \in \Z$.
\begin{thm}[Theorem 1.1 in \cite{io}] Let $X$ be a smooth projectively normal variety of dimension $n>1$, and let $m\in \mathbb{Z}$ be the integer such that $\omega_X \cong \of_X(m)$. If $H^1(X, \of_X(k))=0$ for every $k\in \mathbb{Z}$, then we have \[ (T^1_{A_X})_{m} \cong H^{n-1,1}_{\prim}(X). \]
\end{thm}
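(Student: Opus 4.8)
\emph{Sketch of the argument.} The plan is to compute $T^1_{A_X}$ by embedded deformation theory, to rewrite its weight-$m$ piece as a sheaf-cohomology group on $X$, and finally to recognise that group Hodge-theoretically by means of the subcanonical hypothesis. Fix the embedding $X\subset\PP^N$ defined by the very ample $\of_X(1)$, let $S=\bigoplus_{\nu\ge 0}H^0(X,\of_X(\nu))$ be its homogeneous coordinate ring (a normal domain, by projective normality), and let $A_X=\Spec S\subset\A^{N+1}$ be the affine cone with vertex $o$. Since $n>1$, $\dim A_X=n+1\ge 3$; and the hypothesis $H^1(X,\of_X(k))=0$ for all $k$ is precisely the vanishing of the degree-$2$ local cohomology of $S$ at the vertex, so $A_X$ has depth $\ge 3$ at $o$. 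Under these conditions one may read $T^1_{A_X}$ off the conormal sequence of $A_X\subset\A^{N+1}$: because higher cohomology of coherent sheaves on the affine $A_X$ vanishes and the vertex contributions drop out by the depth estimate, passing to $\mathbb{G}_m$-weights gives, in weight $\nu$,
\[
(T^1_{A_X})_\nu\;\cong\;\operatorname{coker}\Bigl(H^0\bigl(X,\of_X(\nu+1)\bigr)^{\oplus(N+1)}\longrightarrow H^0\bigl(X,N_{X/\PP^N}(\nu)\bigr)\Bigr),
\]
the map being induced by the defining equations of $X$ (the ``Jacobian'' map).

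The next step is to feed into this the restricted Euler sequence $0\to\of_X(\nu)\to\of_X(\nu+1)^{\oplus(N+1)}\to T_{\PP^N}(\nu)|_X\to 0$ and the normal-bundle sequence $0\to T_X(\nu)\to T_{\PP^N}(\nu)|_X\to N_{X/\PP^N}(\nu)\to 0$. This is where the hypothesis $H^1(X,\of_X(k))=0$ is used again: it makes $H^0(\of_X(\nu+1))^{\oplus(N+1)}\to H^0(T_{\PP^N}(\nu)|_X)$ surjective, so the cokernel above is unchanged if its source is replaced by $H^0(T_{\PP^N}(\nu)|_X)$, and a short diagram chase then turns it into a kernel at the $H^1$-level:
\[
(T^1_{A_X})_\nu\;\cong\;\ker\Bigl(H^1\bigl(X,T_X(\nu)\bigr)\longrightarrow H^1\bigl(X,T_{\PP^N}(\nu)|_X\bigr)\Bigr).
\]

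Now I specialise to $\nu=m$. Subcanonicity gives the canonical identification $T_X\otimes\of_X(m)\cong T_X\otimes\omega_X\cong\wedge^{n-1}\Omega^1_X=\Omega^{n-1}_X$ (contract $\wedge^n\Omega^1_X$ against $T_X$), so $H^1(X,T_X(m))\cong H^1(X,\Omega^{n-1}_X)=H^{n-1,1}(X)$. What is left is to show that the kernel of the map $H^{n-1,1}(X)\to H^1(X,T_{\PP^N}(m)|_X)$ is exactly the primitive summand $H^{n-1,1}_{\prim}(X)$. By Serre duality on $X$ (using $\of_X(-m)\otimes\omega_X\cong\of_X$) this is equivalent to saying that the image of the natural map $H^{n-1}(X,\Omega^1_{\PP^N}|_X)\to H^{n-1}(X,\Omega^1_X)=H^{1,n-1}(X)$ is the non-primitive part, i.e.\ the image of multiplication by the hyperplane class $L\colon H^{0,n-2}(X)\to H^{1,n-1}(X)$: the cohomology classes that ``come from $\PP^N$'' are precisely the Lefschetz ones. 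I would prove this last point by combining the Lefschetz decomposition with the Carlson--Griffiths description of the infinitesimal variation of Hodge structure of $X$ inside the ambient projective space.

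The main obstacle is this final identification, kernel $=$ primitive cohomology; a secondary, more technical hurdle is the careful bookkeeping of $\mathbb{G}_m$-weights and of the vertex local cohomology needed to justify the first displayed isomorphism with consistent gradings — which is exactly the place where the dimension hypothesis $n>1$ and projective normality enter. As a reassuring check, when $X=\{f=0\}\subset\PP^{n+1}$ is a smooth hypersurface of degree $d$ one has $N_{X/\PP^{n+1}}=\of_X(d)$ and $m=d-n-2$, and the formulae above collapse to $(T^1_{A_X})_m=\bigl(\C[x_0,\dots,x_{n+1}]/J_f\bigr)_{2d-n-2}$, which is Griffiths' classical presentation of $H^{n-1,1}_{\prim}(X)$.
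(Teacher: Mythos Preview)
The present paper does not prove this statement: it is quoted as Theorem~1.1 of \cite{io} and used as input, so there is no in-paper proof to compare your sketch against. Your outline is nonetheless correct and is essentially the strategy of \cite{io}: Schlessinger's comparison (using projective normality, $n>1$, and the vanishing of $H^1(\of_X(k))$ to control the vertex) expresses $(T^1_{A_X})_\nu$ as the Jacobian cokernel you wrote; the restricted Euler and normal sequences then convert it into $\ker\bigl(H^1(T_X(\nu))\to H^1(T_{\PP^N}|_X(\nu))\bigr)$; and at $\nu=m$ subcanonicity gives $T_X(m)\cong\Omega^{n-1}_X$.

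For the step you flag as the main obstacle, your Serre-dual reformulation already suffices and you do not need Carlson--Griffiths. From the restricted dual Euler sequence $0\to\Omega^1_{\PP^N}|_X\to\of_X(-1)^{\oplus(N+1)}\to\of_X\to 0$ and the vanishing $H^{n-1}(\of_X(-1))\cong H^1(\of_X(m+1))^\vee=0$ (Serre duality together with the hypothesis $H^1(\of_X(k))=0$), the connecting map $H^{n-2}(\of_X)\to H^{n-1}(\Omega^1_{\PP^N}|_X)$ is surjective; since this connecting map is cup-product with the hyperplane class, the image of $H^{n-1}(\Omega^1_{\PP^N}|_X)\to H^{1,n-1}(X)$ is exactly $L\cdot H^{0,n-2}(X)$. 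Its annihilator under the Serre pairing $H^{n-1,1}(X)\times H^{1,n-1}(X)\to\C$ is $H^{n-1,1}_{\prim}(X)$, which is precisely the kernel you need. So your sketch can be completed as written; the only genuinely delicate point is the grading bookkeeping you already singled out.
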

The relation between $T^1$ and Hodge theory will be crucial for the rest of this paper.

\subsection{Cohomology of projective bundles and of complete intersections in $\PP^N$}
In order to extend Griffiths' original result from hypersurfaces to complete intersections in projective spaces, one of the main tools is the Cayley trick approach of Dimca, Konno, Terasoma et al. Starting from a complete intersection $Z \subset P$ one can construct a hypersurface $\zt$ in a projective bundle $\PP(\E)$ over $P$. The middle vanishing cohomology of $Z$ and  of $\zt$ coincide up to a shift. Therefore one can apply the construction for a hypersurface to $\zt \subset \PP(\E)$, with suitable modifications. In \cite{konno} a generalised version of a Griffiths ring for a variety defined by the zero set of a generic section of $\E$ is defined. However, the result was made explicit only for complete intersections in a projective space. We give here an explicit version of Griffiths' residue theorem for complete intersections in Grassmannians as well.  We will now go through a recap of some preliminary concepts on projective bundles that we will need later in the paper.

Let $\E$ be a vector bundle of rank $c$ on an $n$ dimensional smooth and proper variety $X$. Denote by $\E_x$ the fiber over $x \in X$. Consider the projective vector bundle $$ \pi: Y = \PP(\E) \to X,$$ whose fiber is the space $\PP(\E_x)$. To avoid any confusion we consider $\PP(\E)$ as the space of rank one quotients. Useful sequences to understand the geometry of $Y$ in terms of $X$ are the \emph{relative tangent sequence}
\begin{equation} \label{reltang}
 0 \to T_{Y/X} \to T_Y \to \pi^* T_X \to 0,
\end{equation} and the \emph{relative Euler sequence}
\begin{equation}
0 \to \of_Y \to \pi^* \E^* \otimes \mL \to T_{Y/X} \to 0,
\end{equation}  where $\mL = \of_Y(1)$ denotes the tautological quotient line bundle on the projective bundle $Y$, which is ample if and only if $\E$ is. The following lemma is useful in this context.
\begin{lemma}[Lemma 1.2, \cite{konno}] \label{pistar}Let $\mF$ be a vector bundle on $X$. Then $$H^q(Y, \pi^*\mF \otimes \mL ^h) \cong \begin{sistema}   H^q(X, \mF \otimes \Sym^h \E) \ \ \textrm{if } h\geq 0 \\ H^{q-c+1}(X, \mF \otimes det \ \E^* \otimes \Sym^{-h-c}\E^*) \ \ \textrm{if }h \leq -c  \\ 0 \ \ \textrm{              otherwise}\end{sistema}$$
\end{lemma}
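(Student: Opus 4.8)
The plan is to prove Lemma~\ref{pistar} (Konno's Lemma 1.2) by reducing the cohomology of $\pi^*V \otimes \mL^h$ on the projective bundle $Y=\PP(\E)$ to cohomology on the base $X$ via the Leray spectral sequence for $\pi$, and to do this we first need to identify the higher direct images $R^i\pi_*\mL^h$. First I would recall the standard computation of $R^i\pi_*\cO_Y(h)$ for a projective bundle of rank $c$ (fiber $\PP^{c-1}$): using the relative Euler sequence and the fact that on each fiber $H^j(\PP^{c-1},\cO(h))$ is nonzero only for $j=0$ (when $h\geq 0$) or $j=c-1$ (when $h\leq -c$), one gets
\[
R^i\pi_*\mL^h \cong \begin{sistema}
\Sym^h\E \ \ \textrm{if } i=0,\ h\geq 0\\
\det\E^* \otimes \Sym^{-h-c}\E^* \ \ \textrm{if } i=c-1,\ h\leq -c\\
0 \ \ \textrm{otherwise.}
\end{sistema}
\]
Here I would be careful about the normalization convention for $\PP(\E)$ and $\cO(1)$ (Grothendieck vs.\ sub-bundle convention), since this is exactly what controls whether $\Sym^h\E$ or $\Sym^h\E^*$ appears; the relative Euler sequence quoted in the excerpt fixes the convention, so I would derive the $i=0$ case directly from $\pi_*(\pi^*\E^*\otimes\mL)=\E^*\otimes\pi_*\mL$ together with $\pi_*\cO_Y=\cO_X$ and relative Serre duality $R^{c-1}\pi_*\mL^h \cong (\pi_*(\mL^{-h}\otimes\omega_{Y/X}))^\vee$ with $\omega_{Y/X}\cong \mL^{-c}\otimes\pi^*\det\E$ (again up to the convention).

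The second step is the projection formula: since $V$ is locally free, $R^i\pi_*(\pi^*V\otimes\mL^h)\cong V\otimes R^i\pi_*\mL^h$. Plugging in the three cases above, the Leray spectral sequence $E_2^{p,q}=H^p(X,R^q\pi_*(\pi^*V\otimes\mL^h))\Rightarrow H^{p+q}(Y,\pi^*V\otimes\mL^h)$ has only one nonzero row: for $h\geq 0$ the row $q=0$, giving $H^q(Y,\pi^*V\otimes\mL^h)\cong H^q(X,V\otimes\Sym^h\E)$; for $h\leq -c$ the row $q=c-1$, giving $H^q(Y,-)\cong H^{q-(c-1)}(X,V\otimes\det\E^*\otimes\Sym^{-h-c}\E^*)$; and for $-c<h<0$ all rows vanish, so the cohomology is zero. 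Since the spectral sequence degenerates (a single nonzero row), these isomorphisms are immediate with no extension problems.

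The main obstacle, such as it is, is bookkeeping rather than depth: getting the $\PP(\E)$-versus-$\PP(\E^*)$ convention consistent with the stated Euler sequence, and correctly tracking the twist by $\det\E$ and the cohomological shift by $c-1$ in relative Serre duality on the fibers $\PP^{c-1}$. One clean way to sidestep sign/convention errors is to verify the formula fiberwise first (pure $\PP^{c-1}$, $\E$ trivial, $V$ trivial), which pins down all constants, and then globalize by the projection formula and base change, which is legitimate since all the relevant sheaves are locally free and the fiber cohomology has constant rank in each of the three ranges of $h$. Since this is a cited result (Konno, Lemma 1.2), for the purposes of this paper it suffices to indicate this derivation and refer to \cite{konno} for the full details.
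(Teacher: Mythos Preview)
Your proposal is correct and follows the standard argument. Note, however, that the paper does not actually give a proof of this lemma: it is quoted verbatim as Lemma~1.2 from \cite{konno} and used as a black box, so there is no ``paper's own proof'' to compare against. Your outline via the projection formula, the identification of $R^i\pi_*\mL^h$ from the fiberwise cohomology of $\PP^{c-1}$ (with relative Serre duality for the top-degree case), and degeneration of the Leray spectral sequence to a single nonzero row is exactly the classical derivation, and your caveat about the $\PP(\E)$ convention is well placed.
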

From the above Lemma it follows that $H^0(X, \E) \cong H^0(Y, \mL)$. To $\sigma \in H^0(X, \E)$ we associate the corresponding section  $\hat{\sigma}$ of $\mL$ on $Y$.
Let $Z$ and $\zt$ be the zero loci of $\sigma$ and $\hat{\sigma}$. 
 The Hodge theory of $Z$ and $\zt$ are strongly related: namely, we have the following result.
\begin{proposition} [Proposition 4.3, \cite{konno}]\label{konno} There exists a canonical isomorphism of Hodge structures $$H^q_{\van}(Z, \C) (1-c) \cong H^{q+2c-2}_{\van}(\zt, \C).$$
\end{proposition}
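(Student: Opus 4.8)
The plan is to deduce the statement by comparing the Gysin (residue) exact sequences of the smooth pair $\zt \subset Y$ and of the smooth pair $Z \subset X$, linked through a common open subset.

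\emph{Step 1 (the complement as an affine bundle).} Over $x \in X$ the fibre $\PP(\E_x) \cong \PP^{c-1}$ carries $\mL|_{\PP(\E_x)} \cong \of_{\PP^{c-1}}(1)$, and the restriction $H^0(Y,\mL) = H^0(X,\E) \to H^0(\PP(\E_x),\of(1)) = \E_x$ is evaluation $\sigma \mapsto \sigma(x)$; hence $\hat\sigma$ cuts a hyperplane $\PP^{c-2}$ in the fibre over $x \notin Z$ and the whole fibre over $x \in Z$. Thus $Y_0 := Y \setminus \zt$ maps onto $X \setminus Z$, and $p|_{Y_0} \colon Y_0 \to X\setminus Z$ is a Zariski-locally trivial $\A^{c-1}$-bundle, in fact a torsor under a rank $c-1$ vector bundle (the complement of a hyperplane sub-bundle inside a $\PP^{c-1}$-bundle). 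Consequently pull-back $(p|_{Y_0})^* \colon H^k(X\setminus Z,\C) \to H^k(Y_0,\C)$ is an isomorphism of mixed Hodge structures for every $k$ (with trivial Tate twist, the fibre being $\A^{c-1}$). I would also note here that $\zt$ is automatically smooth, since $\sigma$ a regular section forces the Jacobian of $\hat\sigma$ to be non-zero along $\zt$.

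\emph{Step 2 (two residue sequences) and matching cokernels.} Since $\zt$ is a smooth divisor in $Y$ and $Z$ a smooth subvariety of codimension $c$ in $X$, I would invoke the exact sequences of mixed Hodge structures $\cdots \to H^k(Y) \xrightarrow{r'} H^k(Y_0) \to H^{k-1}(\zt)(-1) \xrightarrow{j_*} H^{k+1}(Y) \to \cdots$ and $\cdots \to H^k(X) \xrightarrow{r} H^k(X\setminus Z) \to H^{k-2c+1}(Z)(-c) \xrightarrow{\iota_*} H^{k+1}(X) \to \cdots$, with $r,r'$ the restrictions. Exactness gives $\operatorname{coker}(r') \cong \ker(j_* \colon H^{k-1}(\zt)(-1) \to H^{k+1}(Y))$ and $\operatorname{coker}(r) \cong \ker(\iota_* \colon H^{k-2c+1}(Z)(-c) \to H^{k+1}(X))$. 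Now by Leray--Hirsch $H^\bullet(Y) = \bigoplus_{\ell=0}^{c-1} p^*H^{\bullet-2\ell}(X)\cdot h^\ell$ with $h = c_1(\mL)$; but $\mL|_{Y_0}$ has the nowhere-zero section $\hat\sigma|_{Y_0}$, so $h|_{Y_0}=0$, whence $r'$ kills every summand with $\ell \geq 1$ and equals $p^*\circ r$ on the summand $\ell = 0$. Under the identification of Step 1 this gives $\operatorname{im}(r') = (p|_{Y_0})^*(\operatorname{im} r)$, hence $\operatorname{coker}(r') \cong \operatorname{coker}(r)$, and therefore $\ker(j_* \colon H^{k-1}(\zt)(-1) \to H^{k+1}(Y)) \cong \ker(\iota_* \colon H^{k-2c+1}(Z)(-c) \to H^{k+1}(X))$.

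\emph{Conclusion.} For the smooth divisor $\zt$ (resp. the smooth complete intersection $Z$) the kernel of the Gysin map to the cohomology of the ambient space coincides, in the relevant degrees, with the cokernel of the restriction from the ambient space, that is, with the vanishing cohomology; so the left-hand side above is $H^{k-1}_{\van}(\zt)(-1)$ and the right-hand side is $H^{k-2c+1}_{\van}(Z)(-c)$. Untwisting by $(-1)$ and setting $k-1 = q + 2c - 2$ yields $H^{q+2c-2}_{\van}(\zt) \cong H^{q}_{\van}(Z)(1-c)$, which is the assertion. The delicate points I expect are in Step 1 — identifying $Y_0 \to X\setminus Z$ as a vector-bundle torsor and that pull-back along it is a morphism of mixed Hodge structures, hence an isomorphism — and the verification that the two descriptions of $H_{\van}$ (cokernel of restriction versus kernel of Gysin) genuinely agree in the degrees that enter; the remainder is bookkeeping of degree shifts and Tate twists.
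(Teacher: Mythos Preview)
The paper does not supply its own proof of this proposition: it is quoted verbatim as Proposition~4.3 of \cite{konno} and used as an input, so there is nothing in the paper to compare against.

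That said, your argument is correct and is essentially the standard way this result is obtained. The key geometric observation --- that $Y\setminus\zt$ is an $\A^{c-1}$-bundle over $X\setminus Z$, so that the restriction maps from $H^k(Y)$ and $H^k(X)$ to the cohomology of the common open have the same cokernel --- is exactly what makes the Cayley trick work at the level of Hodge structures, and your use of Leray--Hirsch together with $h|_{Y_0}=0$ to identify $\operatorname{im}(r')$ with $(p|_{Y_0})^*\operatorname{im}(r)$ is the clean way to execute it. One small wording point: in your concluding paragraph you speak of ``the two descriptions of $H_{\van}$ (cokernel of restriction versus kernel of Gysin)'', but in fact your Step~2 only uses the kernel-of-Gysin description; the ``cokernel of restriction'' appearing there is $\operatorname{coker}\big(H^k(Y)\to H^k(Y_0)\big)$, not $\operatorname{coker}\big(H^{k-1}(Y)\to H^{k-1}(\zt)\big)$, so no separate comparison of the two notions is needed. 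The compatibility with Hodge structures is automatic since all maps in the Gysin sequences, as well as $(p|_{Y_0})^*$, are morphisms of mixed Hodge structures.
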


As an example, take $X \cong \PP^N$. Assume that $\E$ splits into a direct sum of line bundles, i.e. $\E \cong \of_{\PP^n}(d_i)$. Since we have $H^0(X, \E) \cong H^0(Y, \mL)$ we can consider the total coordinate ring of $Y$ $$ S= \C[x_0, \ldots, x_N, y_0, \ldots, y_c].$$
The Picard group of $\zt$ has rank two: therefore the ring above comes with a suitable bi-grading. We set deg$(x_i)=(0,1)$ and deg$(y_i)=(1,-d_i)$. This choice of bi-grading is inspired by the above isomorphism with the global sections of the normal bundle $\E|_Z$.
We have the following result.
\begin{thm}[Theorem 7 in \cite{dimca1994residues}] \label{dimcares}Let $Z=V(f_1, \ldots, f_c)$ be a smooth complete intersection of dimension $n$ in $\PP^N$ with normal bundle $ \bigoplus^c \of(d_i)$ and $\omega_Z \cong \of_Z(m)$. Set $F= \sum f_i y_i$. Denote by $$\mU_{a,b}:=(S/J)_{a,b},$$ where $J$ is the ideal generated by $(\frac{\de F}{\de x_0}, \ldots, \frac{\de F}{\de y_c})$. Then $$ \mU_{p,m} \cong H^{n-p,p}_{\van}(X).$$
\end{thm}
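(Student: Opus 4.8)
To prove this (following the method of \cite{dimca1994residues} and \cite{konno}), the plan is to reduce the statement to a Griffiths--Steenbrink residue theorem for a single ample hypersurface in a smooth projective toric variety, by means of the Cayley trick recalled above, and then to carry out the degree bookkeeping.

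\emph{Setting up the toric model.} First I would set $\E=\bigoplus_{i=1}^{c}\of_{\PP^N}(d_i)$, $Y=\PP(\E)$, $\mL=\of_Y(1)$, and let $\zt=V(F)\subset Y$ be the zero locus of $F=\sum_i f_i y_i\in H^0(Y,\mL)\cong H^0(\PP^N,\E)$. Since $Z$ is smooth, a local computation in the fibre coordinates $y_i$ shows that $\zt$ is smooth and that $\mL$ is very ample on $Y$; moreover $Y$ is a smooth projective toric variety, being the projectivization of a split bundle over $\PP^N$. Its Cox ring is exactly $S=\C[x_0,\dots,x_N,y_1,\dots,y_c]$ with the bigrading $\deg x_j=(0,1)$, $\deg y_i=(1,-d_i)$ recorded above, and its Picard group is $\Z^2$ with basis $(0,1)$ and $(1,0)$.

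\emph{Residues on $Y$ and the Cayley shift.} Since $\zt$ is a quasi-smooth ample hypersurface in the simplicial (in fact smooth) toric variety $Y$, the toric residue theorem identifies the vanishing cohomology of $\zt$ with graded pieces of the Jacobian ring $S/J$, where $J=(\de F/\de x_0,\dots,\de F/\de x_N,\de F/\de y_1,\dots,\de F/\de y_c)$: explicitly
\[
H^{n'-q,q}_{\van}(\zt)\;\cong\;[S/J]_{(q+1)[\zt]-\beta_0},\qquad n'=\ddim\zt=N+c-2,
\]
with $[\zt]=\deg F=(1,0)$ and $\beta_0=-K_Y=\sum_j(0,1)+\sum_i(1,-d_i)=(c,\,N+1-\sum d_i)$, so the right-hand side is $[S/J]_{(q+1-c,\ \sum d_i-N-1)}$. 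On the other hand, Proposition~\ref{konno} gives an isomorphism of Hodge structures $H^n_{\van}(Z)(1-c)\cong H^{n+2c-2}_{\van}(\zt)=H^{n'}_{\van}(\zt)$; a Tate twist by $(1-c)$ carries the Hodge summand $H^{n-p,p}$ to $H^{n-p+c-1,\,p+c-1}$, so setting $q=p+c-1$ we obtain
\[
H^{n-p,p}_{\van}(Z)\;\cong\;H^{n'-q,q}_{\van}(\zt)\;\cong\;[S/J]_{(p,\ \sum d_i-N-1)}.
\]
Finally adjunction for a complete intersection of multidegree $(d_1,\dots,d_c)$ in $\PP^N$ gives $\omega_Z\cong\of_Z(\sum d_i-N-1)$, i.e.\ $m=\sum d_i-N-1$, so the last group is precisely $\mU_{p,m}$, which is the assertion.

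\emph{Where the work is.} The routine part is the Griffiths--Dwork machine producing the displayed residue isomorphism on $Y$: a Cayley--Koszul resolution of the pole-order filtration on $H^{n'+1}(Y\setminus\zt)$, whose cohomology is computed by repeated use of Lemma~\ref{pistar}, followed by comparison with the Hodge filtration via the residue map. The delicate points are instead (i) checking genuine (quasi-)smoothness of $\zt$ and ampleness of $\mL$, and fixing the identification of the Picard group of $Y$ with $\Z^2$ so that $[\zt]$ and $\beta_0$ are computed unambiguously; and (ii), the real obstacle, verifying that the toric Jacobian ideal of $F$ coincides with the naive ideal of partials written in the statement. The second uses the two Euler relations on $S$, one for each $\C^{*}$-weight: on a projective-bundle toric variety they let one trade the logarithmic derivatives appearing in the general toric theorem for the ordinary $\de F/\de x_j$ and $\de F/\de y_i=f_i$, which is exactly where the special, very explicit shape of the ambient space is used.
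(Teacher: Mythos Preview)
The paper does not prove this statement: it is quoted in the preliminaries as Theorem~7 of \cite{dimca1994residues} and used as a known input, so there is no argument in the text to compare against directly.

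Your sketch is the standard route and the bookkeeping is correct. You recognise that $Y=\PP(\E)$ over $\PP^N$ is a smooth projective toric variety with Cox ring $S=\C[x_0,\dots,x_N,y_1,\dots,y_c]$ in the given bigrading, apply the toric Griffiths residue theorem to the ample hypersurface $\zt=V(F)$, and then transport the result to $Z$ via Proposition~\ref{konno}. The anticanonical bidegree $\beta_0=(c,\,N+1-\sum d_i)$, the shift $(q+1)[\zt]-\beta_0=(q+1-c,\,\sum d_i-N-1)$, and the Tate-twist substitution $q=p+c-1$ are all right, and the two Euler relations indeed collapse the toric Jacobian ideal to the naive ideal of partials (with $F=\sum_i y_i\,\partial F/\partial y_i$ automatically contained in it). One small imprecision: very ampleness of $\mL$ is not a consequence of the smoothness of $Z$ but of the ampleness of $\E=\bigoplus\of(d_i)$, i.e.\ of $d_i\ge 1$.

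For context, when the paper proves its own Grassmannian analogue (Theorem~\ref{grci}) it cannot rerun a Cox-ring argument, since $\Gr(k,n)$ is not toric. Instead it identifies the Griffiths ring with graded pieces of $H^1(T_{\zt}(a,b))$ via the normal sequence, and then climbs from $H^1$ to $H^p$ by iterating the conormal and restriction sequences \eqref{seq1}--\eqref{seq4}, controlling the error terms with the Borel--Bott--Weil vanishings of Theorem~\ref{snow}. Your toric shortcut is cleaner over $\PP^N$ precisely because the Cox ring is available; the paper's inductive cohomological argument is what survives on homogeneous ambients where it is not.
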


\section{Hypersurfaces in Grassmannians}
 The first step in our analysis consists of formulating the following definition. In the definition below we use the fact that $ \mathfrak{sl}_n$ acts on $S$. We will properly define this action in the lines that follow.
\begin{definition}[cf. \cite{green}, \cite{saito}] \label{jacobianringdef}The \emph{generalised Jacobian ideal} or \emph{Griffiths ideal} $J_f$ of a smooth hypersurface $X=V(f)$ in Gr(k,n) is the homogeneous ideal of $S$ generated by $f \in S_d$ and  $$\lbrace v \cdot f \ | \ v \in \mathfrak{sl}_n \cong H^0(G, T_G) \rbrace .$$
We denote by $R^G_f=S/J_f$ the corresponding \emph{Griffiths ring}.
\end{definition}
We want now to introduce an equivalent definition of $R^G_f$ in the most possible explicit way, that is in terms of generators and relations. Let us fix a basis $v_1, \ldots, v_n$ for $V_n$ and a dual basis $x_1, \ldots, x_n$ for $V_n^{\vee} \cong \C[x_1, \ldots, x_n]_1$.  It is well known that $$H^0(G, \of_G(1)) \cong \W^k V_n^{\vee} \cong \langle \cdots ,x_I, \cdots \rangle,$$ where $I$ denotes a multi-index $\lbrace i_1,\cdots, i_k\rbrace$ of $\lbrace 1, \cdots, n \rbrace $ of length $k$, with $i_1 < \cdots <i_k$ and $x_I:= x_{i_1} \wedge \ldots \wedge x_{i_k}.$ In particular, $S$ is isomorphic to the Pl\"ucker algebra $S\cong \C[x_I]/P,$ with $x_I$ is as above, with $P$ denoting the ideal generated by the equations of the Pl\"ucker embedding. These can be computed quite easily in a recursive way, for example using Macaulay2.\\
To have a complete understanding of $R^G_f$ we only have to make the $\mathfrak{sl}_n$ action explicit. There is a canonical action of $\mathfrak{sl}_n$ on the dual of its tautological module $(V_n)^{\vee}$ (cf. \cite{popov}). Recall that $\mathfrak{sl}_n$ is generated by $$\lbrace  E_{i,j}, \ E_{i,i}- E_{j,j} \ | \ i,j=1, \ldots, n, i \neq j \rbrace, $$
where $E_{i,j}$ denotes the matrix with one in the $(i,j)$-place and zeroes elsewhere. $E_{i,j}$ acts on $(V_n)^{\vee}$ as a differential operator: more precisely to $E_{i,j}$ corresponds the derivations $D^i_j$ defined by $$D^i_j= x_i \frac{\de}{\de x_j}.$$ The action of $D^i_j$ induces a natural action on $\W^k V^{\vee}$ and on 

$ \Sym^r \W^k V^{\vee}$ simply by Leibniz's rule.\\ Therefore, if $X \subset \Gr(k,n)$ is given by the vanishing of a polynomial $f \in S_d$, $J$ will be generated by $f$ itself and by the $n^2-1$ degree $d$ polynomials given by  \begin{equation}\label{genjacobi}\lbrace D^i_j(f), \ D^i_i(f)- D^j_j(f) \ | \ i,j=1, \ldots, n, i \neq j \rbrace. 
\end{equation}
 We can then rephrase the definition of the Griffiths ring as follows. 
\begin{definition}\label{jacobian}Let $X=V(f)$ be a smooth hypersurface in the Grassmannian Gr(k,n). Let $S$ be the coordinate ring of the (affine cone over the) Grassmannian, and let $J$ be the ideal of $S$ generated by $f$ and the equations in \eqref{genjacobi}. We define the Griffiths ring of $X$ as
$$R^G_f:=S/J.$$
\end{definition}
The above definition is quite similar to the one given in Theorem \ref{res} in the projective case. The main difference is the $\mathfrak{sl}_n$-action, which is different from the usual one for $\PP^n$ that sends $ f \mapsto \frac{\de f}{\de x_i}$. Our assumption $k\neq (1, n-1)$ in $\Gr(k,n)$ is therefore relevant.

Generalising Griffiths' calculus, when appropriate vanishings are provided, the Hodge groups $H^p_{\van}(\Omega^{n-p})$ are contained in (some specific homogeneous component of) $S$. In particular, there is a surjective map of graded rings $$\bigoplus S_a \longrightarrow \bigoplus H^{p,n-p}_{\van}(X).$$
Our purpose is to identify the kernel of this surjective map with the above-defined Jacobian ideal $J_f$. Moreover, in what we consider being the core result of this section, we show how to give an explicit presentation of the Jacobian ring (and its graded components) in terms of generators and relations. This in turn allows us to recover explicit (polynomial) basis for the Hodge groups $H^{p,q}_{\van}(X)$, in a generalisation of Griffiths' theorem on $\PP^n$.\\
We point out that the required vanishings for Griffiths' theorem to hold do not always work in the Grassmannian case. Nevertheless, we give a generalised version of Griffiths' strategy, showing how to effectively use our result in a few distinguished examples.\\
The first step consists in linking the generalised Jacobian ring to the $T^1_{A_X}$ of the affine cone over $X$. Recall from \cite{schlessinger} that $T^1_{A_X}$ can be defined as $\Ext^1(\Omega^1_{A_X}, \of_{A_X})$ under the assumption of projective normality of $X$. We refer to \cite{io} for a collection of properties relevant in this context. In particular, recall that for a smooth projective hypersurface the module $T^1_{A_X}$ has a ring structure, and it is isomorphic, up to a degree shift, to the classical Jacobian ideal of $X$. We want to show that the same happens for hypersurfaces in Grassmannian, with the appropriate definition of the Griffiths ring given above. In what follows, recall that $N:=k(n-k)$ denotes the dimension of $\Gr(k,n)$ and that $\omega_G \cong \of_G(-n)$.

\begin{lemma}\label{t1jacobi} Let $X$ be a smooth hypersurface of degree $d$ in the Grassmannian $G=\Gr(k,n)$ defined by the vanishing of $f \in H^0(G, \of_{G}(d))$. We have an isomorphism $$T^1_{A_{X_d}}(-d) \cong R^G_f.$$ 
\end{lemma}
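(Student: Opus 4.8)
The plan is to compute $T^1_{A_X}$ directly from the conormal exact sequence of the affine cone $A_X$ inside the affine cone $A_G$ over the Grassmannian, and to match it term by term with the presentation of $R^G_f$ given in Definition \ref{jacobian}. Write $A = A_G = \Spec S$ for the affine cone over $\Gr(k,n)$ in its Plücker embedding; this is a normal variety with an isolated singularity at the vertex, so that (since $\dim X \ge 3$, hence $\dim A_X \ge 4$) the $T^1$ functor is insensitive to what happens at the vertex and can be computed from the sheafified cotangent complex. The affine cone $A_X$ is the hypersurface in $A$ cut out by $f \in S_d$, so one has the standard conormal sequence
\[
0 \to \mathcal{N}^\vee_{A_X/A} \to \Omega_A|_{A_X} \to \Omega_{A_X} \to 0,
\]
with $\mathcal{N}^\vee_{A_X/A} \cong \mathcal{O}_{A_X}(-d)$ generated by $df$. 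Applying $\Hom_{A_X}(-,\mathcal{O}_{A_X})$ and using that $A_X$ is a hypersurface in the (singular but Cohen–Macaulay) cone $A$, the local-to-global / exact-sequence analysis gives
\[
T^1_{A_X} \cong \Cok\!\big( T_A|_{A_X} \xrightarrow{\ df\ } \mathcal{O}_{A_X}(d) \big)
\]
in the range of degrees we care about, provided $A$ itself is unobstructed along $A_X$ away from the vertex — which holds because $\Gr(k,n)$ is smooth, so $A\setminus\{0\}$ is smooth and $T^1_A$ is supported at the vertex only, hence vanishes in the relevant (sufficiently negative shifted) degrees. So the only real input is: global vector fields on the cone $A$ pair with $f$ by differentiation, and the cokernel of that pairing, twisted by $\mathcal{O}(-d)$, is $T^1_{A_X}(-d)$.

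Next I would identify the module of vector fields on $A$ that are relevant. Since $A = \Spec S$ with $S$ the Plücker algebra, $T_A$ (as an $S$-module, restricted to $A_X$) is generated by the image of $H^0(A,T_A)$; and for the affine cone over a Grassmannian in its minimal embedding one has a direct sum decomposition of $H^0(A,T_A)$ into the Euler vector field (acting as multiplication by the degree, hence sending $f \mapsto d\cdot f$, which contributes the generator $f$ of $J$) together with the $\mathfrak{sl}_n$-action coming from $H^0(G,T_G) \cong \mathfrak{sl}_n$ — these act on $S \subset \Sym^\bullet \bigwedge^k V^\vee$ exactly by the derivations $D^i_j$ described before \eqref{genjacobi}. (Here I would invoke Bott vanishing / the Borel–Weil computation of $H^0(\Gr(k,n),T_G)=\mathfrak{sl}_n$ and the fact that, because $k>1$, there are no other homogeneous vector fields creeping in; this is where the hypothesis $k>1$ from the Notation subsection is used.) Feeding this decomposition into the cokernel description above, the image of $df$ is precisely the $S$-submodule generated by $f$ and by $\{D^i_j(f),\ D^i_i(f)-D^j_j(f)\}$, i.e. exactly the ideal $J$ of Definition \ref{jacobian}. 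The grading bookkeeping — the twist by $(-d)$ — comes out of the fact that $\mathcal{N}^\vee_{A_X/A} = \mathcal{O}_{A_X}(-d)$, which is the same shift that appears in the classical projective-space case.

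The main obstacle, and the step I would spend the most care on, is the homological justification that $T^1_{A_X}$ really is just $\Cok(df)$ in the relevant degrees — i.e. controlling the contribution of the singularity of the ambient cone $A$ itself. In the projective-space case the ambient cone is smooth ($\A^{N+1}$), so the cokernel description is immediate; here $A_G$ has a genuine (non-lci in general) vertex singularity, so one must argue that the extra $\Ext$ and $T^1_{A_G}$ terms are supported at the vertex and therefore vanish after the grading shift, using $\dim A_X \ge 4$ (equivalently $\dim X \ge 3$) together with depth/Serre $S_2$ properties of the normal cone. Concretely I would: (i) pass to $U = A\setminus\{0\}$, where everything is a smooth-morphism computation and the cokernel formula is clean; (ii) observe that $\dim X\ge 3$ forces $\operatorname{depth}$ at the vertex to be large enough that restriction $S$-module $\leftrightarrow$ sections over $U$ is an isomorphism in each graded piece appearing (this is exactly the kind of statement packaged in Theorem 1.1 of \cite{io} and its hypotheses $H^1(X,\mathcal{O}_X(k))=0$, which hold on the Grassmannian by Bott); and (iii) conclude the graded isomorphism $T^1_{A_X}(-d)\cong S/J = R^G_f$. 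The rest — checking that Leibniz-rule extension of the $D^i_j$ to $\Sym^r\bigwedge^k V^\vee$ matches the Lie-derivative action of $\mathfrak{sl}_n$ on $S$, and matching degrees — is routine.
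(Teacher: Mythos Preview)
Your route is genuinely different from the paper's. The paper never works on the singular cone $A_G$: it stays on $X \subset G$, takes the projective normal sequence $0 \to T_X \to T_G|_X \to \mathcal{O}_X(d) \to 0$, twists by $h$, and uses Serre duality together with the Bott vanishing $H^q(\Omega^1_G(t)) = 0$ for $(q,t)\neq(1,0)$ to kill $H^1(T_G|_X(h))$. This yields $H^1(T_X(h)) \cong H^0(\mathcal{O}_X(d+h))/\mathrm{Im}(\beta_h)$, and the right-hand side is then matched with $(R^G_f)_{d+h}$. Only at the last line does the paper pass to the cone, invoking $H^2(\mathcal{O}_X(h)) = 0$ (this is the sole use of $\dim X \ge 3$) to identify $H^1(T_X(h))$ with $(T^1_{A_X})_h$. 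Your affine-cone approach is more direct in spirit, but it forces you to control the vertex of $A_G$, which the paper sidesteps entirely; your depth sketch for that part is plausible but would need real work, since $A_G$ is in general not a local complete intersection.

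There is, however, a concrete gap in your identification of the image of $df$. You assert that $H^0(A,T_A)$ decomposes as the Euler field plus $\mathfrak{sl}_n$, but $H^0(A,T_A)=\mathrm{Der}(S)$ is an infinite-dimensional graded $S$-module, and $\mathbb{C}E \oplus \mathfrak{sl}_n \cong \mathfrak{gl}_n$ is only its degree-$0$ piece. (Your remark about $k>1$ correctly rules out \emph{negative}-degree derivations, but positive-degree ones certainly exist: $\mathrm{Der}(S)_h$ surjects onto $H^0(G,T_G(h))$ for every $h\ge 0$.) For the cokernel of $df$ to be exactly $S/J$ you need the image of $df$ on \emph{all} of $\mathrm{Der}(S)$ to coincide with the ideal generated by the degree-$0$ contributions, i.e.\ you need $\mathrm{Der}(S)$ to be generated in degree~$0$ as an $S$-module, equivalently $S_h \otimes \mathfrak{sl}_n \twoheadrightarrow H^0(G,T_G(h))$. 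This is true --- it follows from global generation of $T_G$ and a further Bott vanishing for the kernel of $\mathfrak{sl}_n \otimes \mathcal{O}_G \to T_G$ --- but it is not automatic and your proposal does not address it. The paper's argument is admittedly terse at the corresponding step (identifying $\mathrm{Im}(\beta_h)$), but working projectively with $H^0(T_G|_X(h))$ keeps this surjectivity requirement closer to the standard vanishing inputs already in play.
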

\begin{proof}
Consider the short exact sequence $$ 0 \to T_{X} \to T_G|_{X} \to \of_X(d) \to 0.$$
For any twist with $\of_X(h)$ we consider the associated long exact sequence in cohomology on $X$
\begin{equation} \label{h1tg} 
H^0(TG|_X(h)) \stackrel{\beta}{\to} H^0( \of_X(d+h)) \stackrel{\alpha}{\to} H^1(T_X(h)) \to H^1(TG|_X(h)).
\end{equation}
The first thing to show is the vanishing of the last term in the sequence above. One uses the two standard exact sequences (for any $k,t$)
\begin{equation}\label{seq1h}
0 \to \Omega^k_G(t) \to \Omega^k_G (t+d) \to \Omega^k_G|_X(t+d) \to 0,
\end{equation} 
\begin{equation}\label{seq2h}
 0 \to \Omega^{k-1}_X(t) \to \Omega^k_G|_X(t+d) \to \Omega^k_X(t+d) \to 0,
 \end{equation}
and the fact that by Serre duality $H^1(X, T_G(h)|_X) \cong (H^{N-2}(\Omega^1_G|_X (-n+d-h))^{\vee}$. Indeed the latter is zero after expanding in cohomology the first sequence since by Borel--Bott--Weil theorem we have the vanishing of $H^q (\Omega^1_G(t))$ for any $(q,t) \neq (1,0)$, $q>1$.
Therefore in \eqref{h1tg} by properties of exact sequences one has $$H^1(X, T_X(h)) \cong H^0(X, \of_X(h+d))/ \textrm{Im}(\beta).$$
On the other hand, the action of $H^0(TG|_X) \cong \mathfrak{sl}_n$ is given as the derivation  action of $\mathfrak{sl}_n$ on the space of homogeneous polyonomial of degree $h$ in the coordinate ring. For every $h$ therefore $\beta$ coincides with the action defined in \eqref{genjacobi}. The right hand side of the isomorphism above coincides with the given definition of the Jacobian ring $R^G_f$. For dimension reasons $H^2(\of_X(h))=0$.
 This implies that $$T^1_{A_{X}}(-d+h) \cong H^1(T_X(h)).$$
\end{proof}
The above lemma gives us almost everything we need. In fact, thanks to this result we can work directly on $T^1_{A_X}$, whose graded components are identified with the cohomology groups of twists of the tangent bundle $T_X$. We can therefore apply all original Griffiths' machinery, proving at the same time the results for the Griffiths ring $R^G_f$. Many of the proofs use standard diagram-chasing techniques, and therefore we will just sketch them.\\
Using \cite[Theorem 1.1]{io}, Lemma \ref{t1jacobi} implies  $$(R^G_f)_0 \cong H^1(T_X), \ \ (R^G_f)_{m} \cong H^{n-1,1}(X),$$
where $m$ is the integer such that $\omega_X \cong \of_X(m)$. These cohomology spaces both coincide with their primitive part, since $H^2(X, \of_X(k))=0$ for any $k$. In the case of projective hypersurfaces Griffiths' theory implies $$H_{\van}^{n-p,p}(X) \cong (T^1_{A_X})_{(p-1)d-m} \cong H^1(T_X((p-1)d-m),$$
see \cite[Corollary 3.13]{io}. These spaces can be shown to be isomorphic a priori, without deducing it from the previous theorem. This is implied by the vanishings of $H^q(\Omega_{\PP}^p(k))$ for $p\geq 0,q,k>0$ by Bott's theorem (and Hard Lefschetz theorem). On the Grassmannian Gr(k,n) the vanishing of the cohomology group of twisted differentials is a more subtle question. Borel-Bott-Weil theorem is the main source to address the computations of these cohomology groups. A classical survey can be found  for example in Snow's paper  \cite{snow}. The following lemma provides the vanishings required in the Grassmannian case.
\begin{lemma}\label{lemmavanishing}Let $X \subset \Gr(k,n)$ be a smooth hypersurface of degree $d$ and take $p \in \lbrace 1, \ldots, N-2 \rbrace$. Suppose that the following vanishings hold:
\begin{enumerate}[(I)]
\item $H^{p-1}(\Omega_{G}^{N-p}(d))=0$;
\item $ H^p(\Omega_{G}^{N-p}(d))=0$;
\item $H^p(\Omega_G^{N-p})=0$;
\item $H^{p+1}(\Omega_G^{N-p})=0$.
\end{enumerate}
Then the following isomorphism holds
$$ H^{p-1} (\W^{p-1} T_X(2d-n)) \cong H^p(\W^p T_X(d-n)).$$
\end{lemma}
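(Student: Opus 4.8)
The plan is to compare $\W^{p}T_X$ with $\W^{p-1}T_X$ through the exterior powers of the normal bundle sequence of $X$ in $G$, reducing the statement to two cohomology vanishings on $X$ which in turn follow from the ambient vanishings (I)--(IV) on $G$.

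First I would start from $0\to T_X\to T_G|_X\to \of_X(d)\to 0$ (here $N_{X/G}\cong \of_X(d)$ since $X$ has degree $d$). Because the cokernel is a line bundle, taking $p$-th exterior powers produces the short exact sequence
\[
0\to \W^p T_X\to \W^p(T_G|_X)\to \W^{p-1}T_X\otimes \of_X(d)\to 0.
\]
Twisting by $\of_X(d-n)$ and writing out the associated long exact sequence in cohomology, the connecting homomorphism yields the desired isomorphism
\[
H^{p-1}\bigl(\W^{p-1}T_X(2d-n)\bigr)\cong H^{p}\bigl(\W^{p}T_X(d-n)\bigr)
\]
as soon as the two neighbouring terms $H^{p-1}(\W^p(T_G|_X)(d-n))$ and $H^{p}(\W^p(T_G|_X)(d-n))$ vanish. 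So everything is reduced to those two vanishings.

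Next I would rewrite $\W^p(T_G|_X)(d-n)$ in terms of differential forms on $G$. The perfect pairing $\W^pT_G\otimes\W^{N-p}T_G\to\W^NT_G$ together with $\omega_G\cong\of_G(-n)$ gives $\W^pT_G\cong\Omega^{N-p}_G(n)$, hence $\W^p(T_G|_X)(d-n)\cong\Omega^{N-p}_G|_X(d)$; thus it is enough to prove $H^{p-1}(\Omega^{N-p}_G|_X(d))=0$ and $H^{p}(\Omega^{N-p}_G|_X(d))=0$. For this I would restrict to $X$: tensoring $0\to\of_G(-d)\to\of_G\to\of_X\to 0$ with $\Omega^{N-p}_G(d)$ gives
\[
0\to\Omega^{N-p}_G\to\Omega^{N-p}_G(d)\to\Omega^{N-p}_G|_X(d)\to 0,
\]
and in its long exact cohomology sequence $H^{p-1}(\Omega^{N-p}_G|_X(d))$ is bounded by $H^{p-1}(\Omega^{N-p}_G(d))$ and $H^{p}(\Omega^{N-p}_G)$, which vanish by (I) and (III), while $H^{p}(\Omega^{N-p}_G|_X(d))$ is bounded by $H^{p}(\Omega^{N-p}_G(d))$ and $H^{p+1}(\Omega^{N-p}_G)$, which vanish by (II) and (IV). This closes the chain of reductions.

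The argument is entirely formal once the two exact sequences are set up; the only genuine points of care are the exactness of the $p$-th exterior power of the tangent sequence (valid precisely because $N_{X/G}$ has rank one) and the twist bookkeeping in the identification $\W^pT_G\cong\Omega^{N-p}_G(n)$, while the range $p\le N-2$ only keeps all the forms $\Omega^{N-p}_G$ and bundles $\W^pT_X$ in the expected range. I do not expect a real obstacle here: this is the Grassmannian counterpart of the standard reduction for projective hypersurfaces, and the four hypotheses (I)--(IV) have been arranged to be exactly the ambient cohomology groups one needs to kill.
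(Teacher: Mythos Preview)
Your proof is correct and follows essentially the same route as the paper: both take the $p$-th exterior power of the tangent--normal sequence, twist by $d-n$, identify $\W^p(T_G|_X)(d-n)\cong\Omega^{N-p}_G|_X(d)$, and then kill the two restricted cohomology groups via the ideal-sheaf sequence and hypotheses (I)--(IV). If anything, your write-up is slightly more careful than the paper's in justifying why the exterior-power sequence stays short exact (rank-one cokernel) and in calling the identification a perfect pairing rather than ``Serre duality''.
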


\begin{proof}
Consider the tangent-normal sequence raised to the $p$-th power
$$0 \to \W^p T_X(d-n) \to \W^pT_G|_X(d-n) \to \W^{p-1}T_X(2d-n) \to 0.$$
The long associated sequence in cohomology is \begin{small}
$$ \cdots \to H^{p-1}( \W^p TG|_X(d-n)) \to H^{p-1}(\W^{p-1}T_X(2d-n)) \to  H^{p}(\W^{p}T_X(d-n)) \to H^{p}( \W^p TG|_X(d-n)) \to \cdots $$
\end{small}
By the standard tangent pairing $$H^{p-1}(\W^p T_G|_X(d-n)) \cong H^{p-1}(\Omega^{N-p}_G|_X (d));$$
$$H^{p}(\W^p T_G|_X(d-n)) \cong H^{p}(\Omega^{N-p}_G|_X (d)).$$
Using the Koszul complex one has that the vanishing conditions (I,III) imply the vanishing of $ H^{p-1}(\Omega^{N-p}_G|_X (d))$, and the same with $H^{p}(\Omega^{N-p}_G|_X (d))$ and conditions (II, IV).
\end{proof}

The above Lemma gives us only one step of the iterated multiplication map. We remark that for every step the latter is the connecting homomorphism in cohomology of the $p$-th wedge power of the normal sequence. However, one can replicate the same technique and get even more conditions. The proof is rather technical and we will omit it, since it follows the same lines of Lemma \ref{lemmavanishing}.
\begin{lemma} \label{multivan}
Let $X \subset \Gr(k,n)$ be a smooth hypersurface of degree $d$ and consider $p \in \lbrace 1, \ldots, N-2 \rbrace$. Suppose that the following vanishings hold
$$H^j(\Omega^{q}_G(ld))=0, \  \ q=N-p, \ldots, N, j= p+1, \ldots, 1, \ l=0, \ldots, p.$$
Then the following isomorphism holds
$$ H^1(T_X(pd-n)) \cong H^p(X, \Omega^{N-1-p}_X).$$
\end{lemma}
We point out that this set of vanishings is slightly stronger than the one we need. As an example, see Lemma (\ref{lemmavanishing}), where the vanishing of $H^{p+1}(\Omega_G^{N-p}(d))$ is not needed.
What we have to understand now is for which $X_d\subset \Gr(k,n)$ the vanishing conditions of Lemma \ref{multivan} are automatically satisfied. Borel--Bott--Weil theorem transforms the vanishing question into a combinatorial one. We quote the following result by Snow (\cite{snow}).
\begin{thm}[Thm. 3.2, 3.4, 3.5 in \cite{snow}] \label{snow}Denote by $G$ the Grassmannian $\Gr(k,n)$. Then $H^p(G, \Omega^q(t))=0$, for $t\geq 1$, if any of the following conditions are satisfied:
\begin{enumerate}[(I)]
\item $t\geq n$;
\item $kp \geq (k-1)q >0;$
\item $p >N-q$;
\item $q> N-k$, $(k,n) \neq (2,4)$;
\item $q \leq t$, with $p>0$;
\item $t \geq n-k$ and $p >\frac{(n-1-t)(n-t)}{2}$.
\end{enumerate}
\end{thm} 
We are now in position to prove the main result of this section. We recall first the description of the Hodge groups of the Grassmannian. Since the Grassmannian is a homogeneous variety, $h^{i,j}(G)=0$ for $i \neq j$. On the other hand, when $i=j$ the dimension of these spaces are
$$h^{j,j}(G)= \#\lbrace (a_1,\ldots, a_k) | n-k \geq a_1 \geq \ldots \geq a_k \geq 0, \ \sum a_i=j \rbrace. $$ We will need the following definition. 
\begin{definition} \label{contr} For $j \leq \frac{N}{2}$ define $ I_{j-1,j}$ as the cokernel of the injective map given by the multiplication by a hyperplane class $$0 \to H^{j-1,j-1}(G) \to  H^{j,j}(G).$$
\end{definition}
\begin{thm}\label{residueGrass} Let $X_d$ be a smooth hypersurface in the Grassmannian $G=\Gr(k,n)$, and $R^G_f$ the Jacobian ring for $X$ defined in Definition \ref{jacobian}. Assume that $d \geq n-1$. If $\ddim(X) =N-1 \equiv 0 \ (2)$, then $$ [R_f^G]_{(p+1)d-n} \cong H^p_{\van}(X, \Omega^{N-1-p}).$$
If $\ddim(X) =N-1 \equiv 1 \ (2)$ then $$ [R_f^G]_{(p+1)d-n} \cong H^p_{\van}(X, \Omega^{N-1-p}) \oplus \delta_{p, \frac{N}{2}}I_{p-1,p},$$ where $\delta_{p, \frac{N}{2}}$ is the Kronecker delta symbol.
\end{thm}
\begin{proof} By Lemma \ref{t1jacobi} one has $$(R^G_f)_{k+d} \cong (T^1_{A_X})_k \cong H^1(X, T_X(k)).$$
In particular, thanks to Lemma \ref{multivan} we will have $$(R^G_f)_{pd-n+d} \cong  H^1(X, T_X(pd-n)) \cong H^p(X, \Omega^{N-1-p}),$$ provided that the vanishing conditions in the hypotheses hold. By Theorem \ref{snow} all these vanishings are automatically satisfied if $d \geq n$ (part I) and if $d=n-1$ (part VI) except possibly $H^{i+1}(\Omega_G^{N-1-p})=H^{i+1}(\Omega_G^{N-1-p})=0$.\\
Thanks to the given description of the cohomology ring of the Grassmannian, we know that the above groups vanish for almost all values of $i$. In particular, from \eqref{seq1h} and \eqref{seq2h} one gets the sequence in cohomology $$0 \to H^{N-1-p,p-1}(G) \to H^{N-p,p}(G) \to H^{p-1}(\Omega^{N-p}_X(d)) \to H^{N-1-p,p}(X) \to H^{N-p,p+1}(G) \to 0, $$ where we have already taken into account all the other vanishings of Lemma \ref{multivan}. The Hodge groups in the Grassmannian will vanish unless $p+1=N-p$ or $p=N-p$. In the first case $\ddim(X)= N-1=2p$ is even, and we have $$0 \to H^{p-1}(\Omega^{N-p}_X(d)) \to H^{N-1-p,p}(X) \to H^{N-p,p+1}(G) \to 0, $$ that is $$H^1(T_X((\frac{N-1}{2}d-n)) \cong H^{\frac{N-3}{2}}(\Omega^{\frac{N+1}{2}}_X(d))  \cong H^{\frac{N-1}{2}, \frac{N-1}{2}}_{\van}(X).$$ 
When the dimension of $X$ is odd, we first remark that $H^{N-1}(X) = H^{N-1}_{\van}(X)$.
We have $N=2p$ and  $$0 \to H^{N-1-p,p-1}(G) \to H^{N-p,p}(G) \to H^{p-1}(\Omega^{N-p}_X(d)) \to H^{N-1-p,p}(X) \to 0, $$ that is 
$$H^1(T_X((\frac{N}{2}d-n)) \cong H^{\frac{N}{2}-1}(\Omega^{\frac{N}{2}}_X(d))  \cong H^{\frac{N}{2}-1, \frac{N}{2}}(X) \oplus I_{\frac{N-2}{2},\frac{N}{2}}.$$
\end{proof} 
We notice how the statements of the above theorem directly generalise Theorem \ref{res}. In particular, the degree of the homogeneous slices in the Griffiths ring representing the Hodge groups are determined by the degree of the hypersurface and the canonical class of the ambient space.

The above theorem guarantees an extension of the Griffiths' residue calculus to all but a finite number of cases for any Grassmannian (namely, in the Fano case of index $>1$). Of course Borel-Bott-Weil theorem can be effectively used to get either more vanishings or to easily compute the exceptions to the above result in the Fano case. As we have seen, in general for a Grassmannian $\Gr(k,n)$ the difference between $(R_f)_{(p+1)d-n}^G$ and $H^{p,n-p}_{\van}(X)$ can be computed in terms of $H^p(\Omega_G^q(k))$. There exist ad-hoc formulae for these groups, but a general statement is complicated to find. The situation is slightly better for the Grassmannian of lines $\Gr(2,n)$. 

\begin{corollary}Let $X$ be a smooth hypersurface of degree $d$ in the Grassmannian Gr(k,n) defined by $f \in H^0(\of_G(d))$. Then
$$ \bigoplus_{p=1}^{N-1} (R_f)_{(p+1)d-n} \oplus B_{N-1-p,p} \cong \oplus (H^{N-1-p,p}_{\van}(X) \oplus A_{N-1-p,p})$$ with the possible residual contributions $A_{p,N_p-1}, B_{p,N_p-1}$ determined by the non-vanishing of the groups in Lemma \ref{multivan} and therefore depending only by some residual cohomologies of $H^j(\Omega^q_G(k))$.
\end{corollary}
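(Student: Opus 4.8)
The plan is to run the same machinery as in Theorem \ref{residueGrass}, but now tracking \emph{all} the graded pieces simultaneously and being honest about the error terms that appear when the Borel--Bott--Weil vanishings of Theorem \ref{snow} fail. First I would invoke Lemma \ref{t1jacobi} to rewrite each graded component as $(R^G_f)_{(p+1)d-n} \cong H^1(X, T_X(pd-n))$, reducing the statement to a comparison of coherent cohomology groups of twists of $T_X$ with the Hodge groups $H^{N-1-p,p}_{\van}(X)$. Then, instead of assuming $d \geq n-1$, I would iterate the tangent--normal sequence $0 \to \W^iT_X(\cdots) \to \W^iTG|_X(\cdots) \to \W^{i-1}T_X(\cdots) \to 0$ exactly as in the proofs of Lemma \ref{lemmavanishing} and Lemma \ref{multivan}, but now keeping the connecting maps and cokernels rather than discarding them. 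At each stage the potential failure is measured by a group of the form $H^j(\Omega_G^{N-1-p}(\ell d))$ or $H^{j+2}(\Omega_G^{N-1-p}((\ell-1)d))$, i.e. precisely the groups appearing in the hypotheses (I)--(IV) of Lemma \ref{lemmavanishing}; these are the source of the residual contributions $A_{N-1-p,p}$ and $B_{N-1-p,p}$.

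Concretely, I would organise the argument as a spectral-sequence-style bookkeeping over the Koszul complex resolving $\Omega^q_G|_X$ on $X$ by twists of $\Omega^q_G$, combined with the sequences \eqref{seq1h} and \eqref{seq2h}. On one side of each short exact sequence sits $H^1(T_X(pd-n))$ (via Serre duality, $H^{N-1}(\Omega^1_G|_X(n-pd))^\vee$ type terms), and on the other side $H^p(\Omega^{N-1-p}_X)$; the cokernel and kernel of the comparison map are then expressed through the Grassmannian cohomology $H^\bullet(\Omega^\bullet_G(\bullet d))$. I would \emph{define} $A_{N-1-p,p}$ to be the kernel contribution and $B_{N-1-p,p}$ the cokernel contribution of this comparison, so that summing over $p = 1, \ldots, N-1$ and collecting the errors on the appropriate sides yields the claimed isomorphism of graded vector spaces $\bigoplus_p \big((R_f)_{(p+1)d-n} \oplus B_{N-1-p,p}\big) \cong \bigoplus_p \big(H^{N-1-p,p}_{\van}(X) \oplus A_{N-1-p,p}\big)$. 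The fact that these residual terms only involve the \emph{fixed} Grassmannian cohomology (not $X$-dependent data) is exactly what makes them computable case by case, and it is inherited from the fact that the Koszul resolution of $\Omega^q_G|_X$ has terms $\Omega^q_G(-jd)$.

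The main obstacle, and the reason this can only be stated as a corollary with unspecified $A$'s and $B$'s rather than a clean theorem, is that without the hypothesis $d \geq n-1$ the groups $H^j(\Omega^{N-1-p}_G(\ell d))$ need not vanish, and moreover the iterated long exact sequences no longer collapse to a single short exact sequence: one genuinely has a filtration whose graded pieces are these Grassmannian cohomology groups, and controlling how they assemble (i.e. ruling out, or at least parametrising, the differentials in the associated spectral sequence) is delicate. I expect that the honest thing is to phrase the conclusion at the level of Euler characteristics / dimensions in general, and to upgrade to an actual isomorphism only when the relevant groups in Lemma \ref{lemmavanishing} vanish in all but one degree --- which is why the statement hedges with ``the possible residual contributions $\ldots$ determined by the non-vanishing of the groups in \ref{lemmavanishing}''. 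So the proof I would give is: (i) reduce to $T_X$-cohomology via Lemma \ref{t1jacobi}; (ii) set up the iterated tangent--normal and Koszul sequences; (iii) read off that every discrepancy term is a subquotient of some $H^\bullet(\Omega^\bullet_G(\bullet d))$; (iv) name these subquotients $A_{\bullet,\bullet}$, $B_{\bullet,\bullet}$ and sum, noting that when the Lemma \ref{lemmavanishing} groups vanish one recovers Theorem \ref{residueGrass} with $A=B=0$.
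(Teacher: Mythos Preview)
Your proposal is correct and follows exactly the approach implicit in the paper: the corollary is stated there without proof, as an immediate consequence of running the machinery of Lemma~\ref{t1jacobi}, Lemma~\ref{lemmavanishing} and Lemma~\ref{multivan} without the hypothesis $d\geq n-1$, and simply \emph{naming} the obstruction terms $A_{N-1-p,p}$, $B_{N-1-p,p}$ as the surviving pieces of $H^\bullet(\Omega^\bullet_G(\bullet d))$ that no longer vanish. Your honest discussion of the spectral-sequence bookkeeping and the fact that the $A$'s and $B$'s are really subquotients (so the statement is best read at the level of dimensions unless further vanishing holds) is if anything more careful than the paper itself, which leaves these points tacit.
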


\begin{corollary}\label{gr2n}Let $X$ be a smooth hypersurface of degree $d$ in the Grassmannian Gr(2,n) defined by $f \in H^0(\of_G(d))$. Then
$$ \bigoplus_{p=1}^{n-1} (R_f)_{(p+1)d-n} \cong \oplus H^{N-1-p,p}_{\van}(X) \oplus \delta_{p, \frac{N}{2}}I_{p-1,p}$$ with the possible exceptions of
$$\ p=\frac{2n-1-d}{3} \textrm{ and } \ p=\frac{4n-9-d}{3}.$$
\end{corollary}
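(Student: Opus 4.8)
The plan is to follow the proof of Theorem~\ref{residueGrass} closely, the one genuine difference being that Snow's criterion~\ref{snow}(I) (namely $t\ge n$) is unavailable here: since $d\le n-2$, every twist occurring below is a shift of a positive multiple of $d<n$. First I would invoke Lemma~\ref{t1jacobi} to get $(R_f)_{(p+1)d-n}\cong H^1(X,T_X(pd-n))$ for each $p$, and then propagate this group up the chain
$$H^1(T_X(pd-n))\longrightarrow H^2(\W^2 T_X((p-1)d-n))\longrightarrow\cdots\longrightarrow H^p(\W^p T_X(d-n))\cong H^p(X,\Omega_X^{N-1-p})$$
by iterating the one-step isomorphism of Lemma~\ref{lemmavanishing} (equivalently, by quoting Lemma~\ref{multivan}). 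Each step goes through once the four families of Grassmannian cohomology groups $H^\bullet(\Omega_G^\bullet(*))$ listed there vanish, so the whole problem reduces to deciding, for $k=2$ and $N=2n-4$, which of those groups Snow's Theorem~\ref{snow} kills.

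Next I would dispose of the untwisted groups $H^{i+1}(\Omega_G^{N-1-p})$ and $H^{i+2}(\Omega_G^{N-1-i})$ exactly as in Theorem~\ref{residueGrass}: by the diagonal Hodge decomposition of $\Gr(2,n)$ these vanish unless a bidegree coincidence forces $p=N/2$, and that coincidence contributes the cokernel term $\delta_{p,N/2}\,I_{p-1,p}$ of Definition~\ref{contr}; note $N=2n-4$ is even, so $\dim X=N-1$ is odd and we land in the second case of~\ref{residueGrass}, consistently with the statement. For the remaining, positively twisted groups $H^a(\Omega_G^b(jd))$ with $j\ge 1$, I expect conditions (III) and (IV) of Snow to be vacuous in the relevant range (they would push $b$ outside $[1,N-2]$), leaving only (II), i.e. $2a\ge b>0$, and (V), i.e. $b\le jd$, as the operative vanishing criteria. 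Between them these two inequalities annihilate every group that appears, except in the narrow window where one sits simultaneously on the boundary $b=2a$ of (II) and above the threshold of (V). Running Lemma~\ref{lemmavanishing} one step at a time (rather than the cruder~\ref{multivan}) should localize this failure to two particular steps of the chain; reading off the value of $p$ at those steps — essentially where the form degree $N-p$ and the available twist are in the ratio forced by the two boundary equalities — is what I expect to produce the two arithmetic values $p=\tfrac{2n-1-d}{3}$ (from the top, $j=1$ step, where $H^{p-1}(\Omega_G^{N-p}(d))$ falls on the Snow-(II)/Snow-(V) boundary) and $p=\tfrac{4n-9-d}{3}$ (from the matching intermediate step).

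Finally I would conclude: for every $p\in\{1,\dots,n-1\}$ other than those two values, each group in Lemma~\ref{lemmavanishing} at every step is killed by Snow~(II), Snow~(V), or the diagonal argument, the chain of isomorphisms goes through, and one obtains $(R_f)_{(p+1)d-n}\cong H^{N-1-p,p}_{\van}(X)\oplus\delta_{p,N/2}I_{p-1,p}$; at the two special values the relevant Snow criterion is merely inconclusive (Snow's theorem gives only sufficient conditions for vanishing), so they enter as \emph{possible} — not certain — exceptions, which is all the statement claims. The main obstacle is not conceptual but the finite, fiddly bookkeeping needed to certify the vanishing for all remaining $p$ and to pin down exactly these two boundary cases; I would cut the work roughly in half using the duality $H^p(\Omega^{N-1-p}_X)\cong H^{N-1-p}(\Omega^p_X)^{\vee}$ to assume $p\le\tfrac{N-1}{2}$, after which each individual check is a single inequality in $n$, $d$ and $p$.
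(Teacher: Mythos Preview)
Your overall strategy matches the paper's: invoke Lemma~\ref{t1jacobi}, iterate Lemma~\ref{lemmavanishing} (or~\ref{multivan}), and reduce everything to checking vanishings $H^a(\Omega_G^b(t))$ on $G=\Gr(2,n)$. The paper's own proof is a single line: combine Lemma~\ref{lemmavanishing} with the vanishings in \cite{peternell}, Lemma~0.1.

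The gap is your choice of vanishing input. You propose to run the argument through Snow's simplified criteria (II)--(V) of Theorem~\ref{snow}, but those are only \emph{sufficient} conditions, and for $\Gr(2,n)$ with $0<t\le n-2$ they leave an entire region of pairs $(a,b)$ --- roughly $b>2a$, $b>t$, $a+b\le N$ --- where vanishing is undecided, not two isolated points. Your heuristic of sitting ``on the boundary $b=2a$ of (II) and above the threshold of (V)'' cannot produce the stated exceptions: the Snow~(II) boundary is independent of the twist, so no $d$-dependent formula like $p=(2n-1-d)/3$ will ever fall out of it. The two specific values arise instead from the \emph{sharp} non-vanishing statement for $\Gr(2,n)$ --- for each $1\le t\le n-2$ there is (up to duality) a single linear relation among $a,b,t$ locating the non-zero $H^a(\Omega_G^b(t))$ --- and this is precisely what Lemma~0.1 of \cite{peternell} records. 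Plugging that relation into the groups appearing in Lemma~\ref{lemmavanishing} is what isolates exactly $p=(2n-1-d)/3$ and $p=(4n-9-d)/3$; Snow's coarser inequalities would force you to declare a whole interval of $p$ as ``possible exceptions'', which is strictly weaker than what the corollary asserts. The fix is simply to replace Theorem~\ref{snow} by the Peternell--Wisniewski lemma (equivalently, carry out Borel--Weil--Bott in full for $\Gr(2,n)$ rather than quoting only the simplified bounds).
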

\begin{proof}
The case $d\geq n-1$ is already addressed by Theorem \ref{residueGrass}. Therefore we just need to check the case $d \leq n-2$. In particular, we need to check the vanishing of the groups in Lemma \ref{multivan}. For the Grassmannian of lines however, these are listed in \cite[Lemma 0.1]{peternell}.  
\end{proof}
We want now to provide an example to show to the reader how our method can be effectively used in computations.

\subsection{A worked example: Fano fivefold of degree 10}

Our first example is a smooth quadric fivefold hypersurface in the Grassmannian Gr(2,5).  The Grassmannian $\Gr(2,5)$ has dimension six, and it is embedded under the Pl\"ucker embedding in $\PP^9= \PP(\W^2 V_5)$. Its description is well-known, but we will briefly recall it for the convenience of the reader. Its homogeneous ideal of relations is given by the submaximal Pfaffians of a generic skew 5 by 5 matrix, and we can write the five equations as $$I_G=(x_{3,4}x_{2,5}-x_{2,4}x_{3,5}+x_{2,3}x_{4,5},x_{3,4}x_{1,5}-x_{1,4}x_{3,5}+x_{1,3}x_{4,5},x_{2,4}x_{1,5}-x_{1,4}x_{2,5}+x_{1,2}x_{4,5},$$$$
     x_{2,3}x_{1,5}-x_{1,3}x_{2,5}+x_{1,2}x_{3,5},x_{2,3}x_{1,4}-x_{1,3}x_{
     2,4}+x_{1,2}x_{3,4}).$$

 As before, we think of $X$ as defined by the vanishing of an (appropriate) single polynomial $f$ in $H^0(\of_G(2))$. The hypersurface $X$ is an example of a Gushel-Mukai variety in the sense of \cite{ilievmanivel}. We point out that the novelty of this computation is not the determination of the Hodge numbers (that were computed before for example in \cite{nagel}), but the fact that we can now determine explicitly generators and relations for these groups.
 
  By adjunction formula one has that $\omega_X \cong \of_X(-3)$. $X$ is a Fano fivefold of degree 10 and genus 6. In particular, we know straight away that $$H^{0,5}(X) \cong H^{5,0}(X) \cong H^0(K_X)=0.$$  
\begin{lemma}\label{gmhodge} Let $X$ be as above. The following isomorphisms hold \begin{itemize} \item $   (R^G_f)_{-1} \cong H^1(T_X(-3)) \cong H^{4,1}(X)$; \item  $(R^G_f)_{1}\cong H^1(T_X(-1)) \cong H^{3,2}(X)$;\item$ (R^G_f)_{3}\cong H^1(T_X(1)) \cong H^{2,3}(X);$\item $(R^G_f)_{5} \cong H^1(T_X(3)) \cong H^{1,4}(X)$.\end{itemize}
\end{lemma}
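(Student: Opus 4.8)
The plan is to feed the quadric section $X\subset G=\Gr(2,5)$ into the apparatus already built. Record the numerics: $d=2$, $n=5$, $N=k(n-k)=6$, $\dim X=N-1=5$ is odd --- so the ambient $\Gr(2,5)$ has no odd cohomology and hence $H^5(X)=H^5_{\van}(X)$, with $H^{5,0}(X)=H^{0,5}(X)=H^0(K_X)=0$ --- and $\omega_X\cong\of_X(-3)$, i.e. $m=d-n=-3$. Since $\dim X=5\geq 3$, Lemma~\ref{t1jacobi} gives a graded isomorphism $(R^G_f)_{k+2}\cong(T^1_{A_X})_k\cong H^1(X,T_X(k))$ for all $k$; specialising $k=-3,-1,1,3$ produces at once the first isomorphism in each of the four bullets.

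For the remaining isomorphisms I want $H^1(X,T_X(2p-5))\cong H^{5-p,p}(X)$ for $p\in\{1,2,3,4\}$ (note $2p-5=pd-n$ and $5-p=N-1-p$), which is the iterated multiplication statement of Lemmas~\ref{lemmavanishing} and~\ref{multivan} in the $(-n)$-twisted form used in the proof of Theorem~\ref{residueGrass}. Since $d=2\leq n-2=3$, Corollary~\ref{gr2n} applies and already packages the needed Grassmannian vanishings (via Snow's Theorem~\ref{snow} and the vanishings of \cite{peternell}): it flags possible exceptions only at $p=(2n-1-d)/3=7/3$, which is not an integer and hence vacuous, and at $p=(4n-9-d)/3=3$; and the Kronecker symbol $\delta_{p,N/2}=\delta_{p,3}$ vanishes for $p\in\{1,2,4\}$. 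So for $p=1,2,4$ Corollary~\ref{gr2n} immediately yields $(R^G_f)_{(p+1)d-n}\cong H^{5-p,p}_{\van}(X)=H^{5-p,p}(X)$. (For $p=1$ one may equally invoke \cite{io} directly, which gives $(T^1_{A_X})_{-3}\cong H^{4,1}_{\prim}(X)=H^{4,1}(X)$.)

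The one value needing hands-on work is $p=3$. For it I would rerun the exact-sequence computation from the proof of Theorem~\ref{residueGrass}, checking the cohomology vanishings it requires directly for $\Gr(2,5)$ by Snow's Theorem~\ref{snow} together with \cite{peternell}, exactly as in Corollary~\ref{gr2n}. The only surviving contributions from the $G$-side are on-diagonal Hodge groups, and since $\dim X=5=2\cdot 3-1$ the sequence built from \eqref{seq1h} and \eqref{seq2h} collapses to
\[
0\to H^{2,2}(G)\to H^{3,3}(G)\to H^2(X,\Omega_X^3(2))\to H^{2,3}(X)\to H^{3,4}(G)=0 ,
\]
the last term vanishing off the diagonal; hence $(R^G_f)_3\cong H^1(X,T_X(1))\cong H^2(X,\Omega_X^3(2))\cong H^{2,3}(X)\oplus I_{2,3}$, with $I_{2,3}=\Cok(H^{2,2}(G)\to H^{3,3}(G))$ as in Definition~\ref{contr}. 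It remains only to show $I_{2,3}=0$ for $\Gr(2,5)$: the map of Definition~\ref{contr} is injective, so it is an isomorphism as soon as its source and target have the same dimension, and indeed $h^{2,2}(\Gr(2,5))=\#\{(2,0),(1,1)\}=2=\#\{(3,0),(2,1)\}=h^{3,3}(\Gr(2,5))$. Thus $I_{2,3}=0$ and $(R^G_f)_3\cong H^1(T_X(1))\cong H^{2,3}(X)$.

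I expect this $p=3$ step to be the only real obstacle: the general theory pins that graded piece down only up to the correction term $I_{2,3}$, and one has to exploit the numerical coincidence $h^{2,2}(\Gr(2,5))=h^{3,3}(\Gr(2,5))$ --- a shadow of hard Lefschetz --- to see it vanishes. Everything else is bookkeeping with Snow's Theorem~\ref{snow} and the diagram chases already set up in Lemmas~\ref{t1jacobi}, \ref{lemmavanishing}, \ref{multivan} and Corollary~\ref{gr2n}.
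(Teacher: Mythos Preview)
Your approach is the paper's own---apply Corollary~\ref{gr2n}---but you are actually more careful than the paper. The paper's entire proof reads: ``Follows from Corollary~\ref{gr2n}, since neither $\frac{2n-1-d}{3}$ nor $\frac{4n-9-d}{3}$ are integer numbers.'' For $n=5$, $d=2$ the first value is indeed $7/3$, but the second is $9/3=3$, which \emph{is} an integer; so Corollary~\ref{gr2n} formally leaves $p=3$ as a possible exception and the paper's one-line justification is arithmetically incorrect at that point. You catch this and supply the missing argument: you rerun the diagram chase of Theorem~\ref{residueGrass} at $p=3$, note that (by Snow together with \cite{peternell}, which the paper itself invokes later to get $H^j(\Omega^4_G(2))=0$ for small $j$) the only non-vanishing ambient contributions are on the Hodge diagonal, and then compute $I_{2,3}=0$ from $h^{2,2}(\Gr(2,5))=h^{3,3}(\Gr(2,5))=2$. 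This closes the gap cleanly.

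A shorter alternative for the last two bullets, once $p=1,2$ are settled by Corollary~\ref{gr2n}: the Gorenstein self-duality of $R^G_f$ (socle in degree $4$) paired with Serre duality on $X$ gives $(R^G_f)_3\cong (R^G_f)_1^\vee\cong H^{3,2}(X)^\vee\cong H^{2,3}(X)$ and similarly $(R^G_f)_5\cong H^{1,4}(X)$, bypassing the exceptional value entirely. The paper itself uses exactly this pairing in the paragraph following the lemma.
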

\begin{proof}

Follows from Corollary \ref{gr2n}, since neither $\frac{2n-1-d}{3}$ nor $\frac{4n-9-d}{3}$ are integer numbers.
\end{proof}

Now that we established the isomorphisms in abstract, we want to explicitly compute the Griffiths ring of a Gushel-Mukai fivefold.\\
We have therefore to make explicit the action of $\s_5$ on $H^0(\Gr(2,5),\of_G(2))$, the latter being the degree 2 component of the quotient of $\C[x_{1,2}, \ldots , x_{4,5}]$ by the ideal generated by the Pl\"ucker relations.\\
The derivations $D^i_j$ acts as $$D^i_j(x_{r,s} \cdot x_{h,k})=( \delta_{j,r} x_{i,s}+\delta_{j,s} x_{r,i}) x_{h,k}+ x_{r,s}\cdot ( \delta_{j,h} x_{i,k}+\delta_{j,k}x_{h,i}).$$
Extending by linearity we can rewrite the $D^i_j$ in a much more neat form as $$D^i_j= \sum_{k=1}^5 x_{k,i} \frac{\de}{\de x_{k,j}}.$$ 
We prepared a Macaulay2 script that, given a polynomial $f \in H^0(\Gr(2,5),\of_G(2))$ returns the 24 polynomials $D^{i}_j(f)$. The polynomial $f$ needs to be chosen such that the corresponding $X$ is smooth: in turn, this can be checked a posteriori. 
In particular, the Fermat-type polynomial $$f= \sum a_{i,j} x_{i,j}^2$$ works as a choice, as long as we take the coefficients $a_{i,j}$ in a fairly generic way. In particular, none of the $D^j_i(f)$ has to cancel out and become identically zero: to this purpose picking $a_{i,j} \neq a_{r,s}$ will be enough. As an example with random coefficients we can therefore pick $$f=x_{1,2}^2+2x_{1,3}^2+4x_{1,4}^2+5x_{1,5}^2+6x_{2,3}^2+11x_{2,4}^2+75x_{2,5}^2+13x_{3,4}^2+43x_{3,5}^2+8x_{4,5}^2. $$
Using the formula above we write the twenty-four differential polynomials as \begin{align*}
 D^2_1(f)&= 4x_{1,3}x_{2,3}+8x_{1,4}x_{2,4}+10x_{1,5}x_{2,5},     \\    &\vdots                    \\    D^4_4(f)-D^5_5(f)&=8x_{1,4}^2-10x_{1,5}^2+22x_{2,4}^2-150x_{2,5}^2+26x_{3,4}^2-86x_{3,5}^2.
\end{align*}
Denote by $D$ the ideal generated by the 24 polynomials above and $f$. Let $P$ be the ideal generated by the Pl\"ucker equations. By the description above we have $$ R^G_f \cong \C[x_{1,2}, \ldots, x_{4,5}]/(P+D).$$
The Hilbert-Poincaré series of $R^G_f$ is $$\HP(R^G_f)= 1+10t+25t^{2}+10t^{3}+t^4.$$ 
By Lemma \ref{gmhodge} we have $0=(R^G_f)_{-1} \cong H^{4,1}(X)\cong \overline{H^{1,4}(X)}$ and $\C^{10}= (R^G_f)_1 \cong H^{3,2}(X) \cong \overline{H^{2,3}(X)} \cong (R^G_f)_3\cong(R^G_f)_{1}^{\vee}$. This coincides with the calculation already done above.\\
In particular, $(R^G_f)_1 \cong H^{3,2}(X)$ is generated by the degree 1 element in $R$, that is the ten linear forms $\lbrace x_{i,j} \rbrace$, dual to $H^{2,3}(X) \cong (R^G_f)_3$ with respect to the socle generator $x_{4,5}^4$ of $R_4$.

\section{Complete intersections in Grassmannians}
 Let $Z=Z_{d_1, \ldots, d_c} \subset \Gr(k,n)$ be a smooth codimension $c$ complete intersection of multi-degree $d_1, \ldots, d_c$. Set $m=\sum d_i-n$ the adjunction degree of $Z$: in particular, $\omega_Z \cong \of_Z(m)$. Equivalently, $Z$ is defined by a section $\sigma \in H^0(G, \E)$, where $\mathcal{E}= \bigoplus_{i=1}^c \of_G(d_i)$.  
 We associate to $Z$ a hypersurface $\zt \subset Y = \PP(\E)$ with a \emph{Cayley trick} as explained in the preliminaries. This is in fact the same circle of ideas that led to Theorem \ref{dimcares}, allowing to translate the result we obtained in the hypersurface case to complete intersections.
 
 We denote by $\hN=N+(c-1)$ the dimension of $Y$.  The projective bundle $Y$ has Pic$(Y) \cong \Z^2$: pick as a $\Z$-basis $\langle \mL, D \rangle$ with $D=\pi^* \of_G(1)$, and $\mL$ being the tautological quotient line bundle. With respect to this grading, we write  $\mathcal{F}(a,b):= \mathcal{F} \otimes \mL^a \otimes D^b$ and $H^i_{*,*}(\mathcal{F})$ for $\bigoplus_{a,b} H^i(\mathcal{F}(a,b))$. We define the \emph{Griffiths ring} of $Z$ as follows.
\begin{definition} Let $Z, \zt$ be as above. The Griffiths ring of $Z$ is \begin{equation}\label{griffithsring1} \mU=\bigoplus_{a,b}\mU_{a,b}$$ with $$\mU_{a,b}= H^1(\zt, T_{\zt} \otimes \mL^{a-1} \otimes D^b).
\end{equation} \end{definition}
Notice that a priori $\mU$ above has only the structure of bi-graded vector space. The ring structure is given by the following tangent-normal exact sequence, where we denote with $\mL$ the restriction of $\mL$ to $\zt$ as well.
 $$0 \to T_{\zt} \to T_Y|_{\zt} \to \of_{\zt}(\mL) \to 0.$$ 
For any $(a-1,b)$ we consider the twisted version of the above sequence
 $$0 \to T_{\zt}(a-1,b) \to T_Y|_{\zt}(a-1,b) \stackrel{\varphi}{\to} \of_{\zt}(a,b) \to 0.$$
 From the twisted Koszul resolution associated to $\zt$ one can check that $H^1(T_Y|_{\zt}(a,b))= 0$. Therefore, if $F$ denotes the equation of $\zt$ one has  $$H^1(\zt, T_{\zt} \otimes \mL^{a-1} \otimes D^b) \cong H^0(Y, \mL^a \otimes D^b)/(F, \textrm{Im}(\varphi)).$$
 Therefore the ring structure of $\mU$ descends directly from that of  $H^0_{*,*}(Y, \mL^a \otimes D^b)$.  We identify $$\bigoplus_{a,b}H^0(Y, \mL^a \otimes D^b) \cong S[y_1, \ldots, y_c],$$ where $S$ denotes the coordinate ring of the affine cone over the Grassmannian $\Gr(k,n)$. We set the Pl\"ucker variables $x_I$ to have bi-degree (0,1), and the new fiber variables $y_i$ bi-degree $(1, -d_i)$. The choice of bi-grading of the variable is taken in accordance with the projective case, as in Theorem \ref{dimcares}.
 In this set of coordinates,  the equation of $\zt$, $F \in (S[y_1, \ldots, y_c])_{1,0}$, is defined as $F:= \sum_i y_i f_i,$ where the $f_i$ are the equations of the complete intersection $Z$. This is the same strategy used in \cite{konno}, \cite{dimca1994residues} and recalled in Theorem \ref{dimcares}.
  
 From the relative tangent sequence \eqref{reltang} we have that the action of $H^0_{*,*}(T_Y)$ splits into the direct sum of its vertical part and the horizontal part: from the discussion in the previous section, Lemma 2.5, and \cite{konno}, we make explicit this action and give a new definition of the Griffiths Ring of $Z$, that coincides with the one given above.
 \begin{definition} Let $Z$ and $S$ be as above, with the variables $x_I$ with bi-degree (0,1), and the variables $y_i$ with bi-degree $(1, -d_i)$. The \emph{Griffiths ring} of $Z$ can be equivalently defined as \begin{equation}\label{griffithsring2}
  \mU:=S[y_1, \ldots, y_c] /(F, \frac{\de F}{\de y_1},\ldots, \frac{\de F}{\de y_c}, \lbrace D_{x_I}(F) \rbrace ).
 \end{equation}
 \end{definition}
 The derivations $D_{x_I}$ are the ones already defined in the previous section. Notice that $$D_{x_I}(F) = \sum _i y_i D_{x_I}(f_i)$$ and $$\frac{\de F}{\de y_i}=f_i.$$ In turn, the above definition can be further simplified as
  $$\mU:=S[y_1, \ldots, y_c] /(F, f_1, \ldots,f_c, \lbrace D_{x_I}(F) \rbrace ).$$
  In particular, the ideal above can be directly compared with the one in Theorem \ref{dimcares}, where we consider $D_{x_I}(F)$ instead of $\frac{\de F}{\de x_i}$, as in the hypersurface case.   
  
 From the relative Euler sequence we have $\omega_Y \cong \mL^{-c}\otimes D^{m}$,  and by adjunction formula $$\omega_{\zt} \cong \mL^{-c+1}\otimes D^{m}.$$ From  \eqref{griffithsring1} and Proposition \ref{konno} we have the following immediate corollary.
   \begin{corollary}
  $\mU_{1,0} \cong H^1(\zt, T_{\zt}) \cong H^1(Z, T_Z)$.
  \end{corollary}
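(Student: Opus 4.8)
The plan is as follows. The first isomorphism is a tautology: setting $(a,b)=(1,0)$ in \eqref{griffithsring1} gives $\mU_{1,0}=H^1(\zt,T_{\zt}\otimes\mL^{0}\otimes D^{0})=H^1(\zt,T_{\zt})$, so the whole content is the identification $H^1(\zt,T_{\zt})\cong H^1(Z,T_Z)$. I would realise both sides as the cokernel of one and the same map ``infinitesimal ambient automorphisms act on normal sections'' and then match them.

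First I would use the explicit presentation \eqref{griffithsring2}, in its simplified form $\mU=S[y_1,\dots,y_c]/(F,f_1,\dots,f_c,\{D_{x_I}(F)\})$, and read off the bidegree $(1,0)$ component. One has $(S[y_1,\dots,y_c])_{1,0}=\bigoplus_i H^0(\of_G(d_i))\,y_i\cong H^0(G,\E)$. In this bidegree the relations coming from $f_1,\dots,f_c$ span $\bigoplus_i H^0(\of_G(d_i)\otimes I_Z)\,y_i\cong H^0(G,\E\otimes I_Z)$ (note $F=\sum f_i y_i$ is already one of these, and saturation is automatic for a smooth complete intersection), while the relations $\{D_{x_I}(F)\}$ span the image of the action of $\mathfrak{sl}_n=H^0(G,T_G)$ on $\sigma$, because $v\cdot\sigma\in H^0(G,\E)$ has $i$-th component $v\cdot f_i$ and $D_{x_I}(F)=\sum_i (D_{x_I}f_i)\,y_i$. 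Hence $\mU_{1,0}\cong H^0(G,\E)\big/\big(H^0(G,\E\otimes I_Z)+\mathfrak{sl}_n\cdot\sigma\big)$.

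Next I would invoke Borel--Bott--Weil vanishing (Theorem \ref{snow}) together with the Koszul resolution of $\of_Z$, exactly as in the proof of Lemma \ref{t1jacobi}, to get $H^1(G,\E\otimes I_Z)=0$, $H^1(Z,T_G|_Z)=0$ and $H^0(Z,T_G|_Z)=\mathfrak{sl}_n$ (for the last two one uses $T_G\cong\Omega_G^{N-1}(n)$ and feeds the twists $\Omega_G^{N-1}(n-\sum_{l\in S}d_l)$ into Theorem \ref{snow}). The first vanishing identifies $H^0(G,\E)/H^0(G,\E\otimes I_Z)$ with $H^0(Z,\E|_Z)=H^0(Z,N_{Z/G})$, so $\mU_{1,0}\cong\Cok\big(H^0(Z,T_G|_Z)\to H^0(Z,N_{Z/G})\big)$. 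Finally the tangent--normal sequence $0\to T_Z\to T_G|_Z\to N_{Z/G}\to 0$ has long exact sequence $H^0(T_G|_Z)\to H^0(N_{Z/G})\to H^1(T_Z)\to H^1(T_G|_Z)=0$, giving $H^1(Z,T_Z)\cong\Cok\big(H^0(T_G|_Z)\to H^0(N_{Z/G})\big)$; combining the two displays yields $\mU_{1,0}\cong H^1(Z,T_Z)$.

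The hard part will be the vanishing $H^1(Z,T_G|_Z)=0$ — the complete-intersection strengthening of the hypersurface computation inside Lemma \ref{t1jacobi} — which requires checking that the twists appearing in the Koszul hypercohomology spectral sequence fall in the vanishing ranges of Theorem \ref{snow}; this is elementary but genuinely needs to be verified in the cases that occur. A minor point is that the ideal $(f_1,\dots,f_c)$ computes $H^0(I_Z(\bullet))$ in the relevant degrees, which holds since a smooth complete intersection is projectively normal with saturated ideal. Alternatively — and this is presumably why the corollary is labelled ``immediate'' — one can bypass the explicit cokernel computation: the Cayley correspondence underlying Proposition \ref{konno} is compatible with the full bigraded ring structure of $\mU$, and in bidegree $(1,0)$ both sides are by construction the space of first-order deformations, so the statement drops out of the comparison already set up.
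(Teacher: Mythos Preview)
Your proposal is correct, and in fact considerably more detailed than what the paper offers: the paper gives no argument at all, simply declaring the statement an ``immediate'' consequence of the definition \eqref{griffithsring1} together with Proposition~\ref{konno}. The first isomorphism is indeed tautological, as you say; for the second, the paper is implicitly relying on the general Cayley-trick philosophy (developed in Konno's paper, of which Proposition~\ref{konno} is one output) that the family of $Z$'s in $G$ and the family of $\zt$'s in $Y$ share the same parameter space $\PP H^0(G,\E)\cong\PP H^0(Y,\mL)$ and the same automorphism group acting on it, hence the same first-order deformation space. Your final paragraph identifies this correctly.

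Your main route --- reading off the bidegree-$(1,0)$ piece of the explicit presentation \eqref{griffithsring2}, identifying it with $H^0(G,\E)$ modulo $H^0(\E\otimes I_Z)+\mathfrak{sl}_n\!\cdot\!\sigma$, and then matching this cokernel with the tangent--normal sequence on $Z$ --- is a genuinely different and more hands-on argument. It has the virtue of being self-contained and not depending on the comparison results in \cite{konno}; the price is the vanishings $H^1(T_G\otimes I_Z)=0$ and $H^1(Z,T_G|_Z)=0$, which you rightly flag. These do follow from Borel--Bott--Weil (via the Koszul complex and the identification $T_G\cong\Omega_G^{N-1}(n)$), but note that the paper, by outsourcing everything to \cite{konno}, avoids having to check them here --- and correspondingly does not state any hypotheses on the corollary, whereas your route tacitly needs $\dim Z$ large enough for the intermediate cohomology to vanish.
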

We are now able to prove the main result of this section. Define $\delta$ and $I$ as in Theorem \ref{residueGrass}. As above $m:= \sum d_i -n$ so that $\omega_Z \cong \of_Z(m)$.
\begin{thm} \label{grci} Let $Z=Z_{d_1, \ldots, d_c}$ be a smooth complete intersection in a Grassmannian Gr(k,n), and let $\mU$ be the Griffiths ring attached to $Z$. Suppose $m \geq -1$. Then if $ \ddim(Z)=N-c$ is even $$ \mU_{p, m} \cong H^{N-c-p,p}_{\van}(Z).$$ If  $ \ddim(Z)=N-c$ is odd $$ \mU_{p, m} \cong H^{N-c-p,p}_{\van}(Z) \oplus \delta_{p, \frac{N-c}{2}}I_{p,p-1}(G).$$
\end{thm}
\begin{proof}
The first step consists in reducing our analysis to the study of $Y$. From Proposition \ref{konno}, it is enough to prove that $$\mU_{p+1-c, m} \cong H^{\hN-1-p,p}_{\van}(\zt).$$
In fact, $$  H^{\hN-1-p,p}_{\van}(\zt) \cong H^{N+c-2-p,p}_{\van}(\zt) \cong H^{N-p-1,p-c+1}_{\van}(Z),$$ and setting $p'=p+1-c$ we obtain the statement.\\
By definition of Griffiths ring, we have therefore to show that $$ \mU_{p+1-c,m}\cong H^1(\zt, T_{\zt} \otimes \omega_{\zt} \otimes \mL^{p-1}) \cong H^1(\zt, \Omega^{\hN-2} \otimes \mL^{p-1}) \cong H^{p}_{\van}(\zt, \Omega^{\hN-1-p}_{\zt}) .$$
The only non-obvious isomorphism is the second one. This is proved inductively as follows. First use the two exact sequences (Koszul and tangent-normal) \begin{equation}\label{seq1}
0 \to \Omega^{k-1}_{\zt} \otimes \mL^{p-1} \to \Omega^k_{Y|_{\zt}} \otimes \mL^p \to\Omega^{k}_{\zt} \otimes \mL^{p} \to 0 
\end{equation}
\begin{equation}\label{seq2}
 0 \to \Omega^k_Y \otimes \mL^{p-1} \to \Omega^k_Y \otimes \mL^p \to \Omega^k_{Y|_{\zt}} \otimes \mL^p \to 0
\end{equation}
From Lemma 4.9 \cite{konno},  the groups $ H^i(Y,\Omega^k_j \otimes \mL^{p-1})$ vanish if $H^r(G, \Omega^s \otimes \textrm{det}( \E) \otimes \Sym^t \E)=0$, for specific values of $r,s,k$. But from Theorem \ref{snow} all these groups vanish when $m \geq -1$. As in the hypersurface case, the only vanishings that are not automatic are for $H^{p,p}(Y)$. Indeed, using \eqref{seq1} and \eqref{seq2} one gets the division in even and odd case, similarly to what we did in Theorem \ref{residueGrass}. Moreover by K\"unneth formula, $I_{p,p-1}(Y)=I_{p,p-1}(G)$. \\  
When these vanishings are not satisfied, the residual contributions depend only on $H^*(\Omega_Y^k \otimes \mL^j)$. These cohomology groups can be expressed in terms of (cohomology of) $\pi^* \Omega_G^k$ and the relative cotangent bundle $\Omega^{c-1}_{Y/G}$ by picking appropriate exterior powers of the short exact sequence
\begin{equation} \label{seq3}
0 \to \pi^* \Omega^1_G \otimes \mL^p \to \Omega^1_Y \otimes \mL^p \to \Omega^1_{Y/G} \to 0.
\end{equation}
Equivalently, as in Lemma 1.4, \cite{konno}, one could use the following spectral sequence $$E^{i,j-1}_1 =H^j(Y, \Omega^{p-i}_{Y/G} \otimes \mL^p \otimes \pi^*(\Omega^i_G \otimes V)) \Rightarrow H^j(Y, \Omega^p_Y \otimes \mL^p \otimes \pi^*V).$$
The last step consists in expressing the cohomology groups of the exterior power of the relative cotangent bundle in terms of the cohomology groups of bundles on the Grassmannian. This is done via the following sequence (sequence (3) in \cite{konno})
\begin{equation}\label{seq4}
0 \to \Omega^l_{Y/G} \otimes \mL^p \otimes\pi^*\Omega_G^{k-1}\to \pi^*(\W^l \E \otimes \Omega^{k-1}_G) \otimes \mL^{p-l} \to \Omega^{l-1} \otimes \mL^p  \otimes \pi^* \Omega^{k-1}_G \to 0.
\end{equation}
Since $Z$ is a complete intersection in Gr(k,n), its normal bundle in the Grassmannian is $\E= \oplus \of_G(d_i)$. Therefore we are in the situation of Lemma \ref{pistar}, and we can express any cohomology group of the form $H^q(Y, \pi^*\Omega^k \otimes \mL^p)$ as a function of either $H^q(G, \Omega_G^k \otimes \Sym^p \E)$ or $H^{q-r+1}(G, \Omega^k_G \otimes det (\E^*) \otimes \Sym^{-p-c} \E^*)$, with both $S^p\E$ and $det(\E^*)$ equal to (the sum of some) $\of_G(d_i)$.  
\end{proof} 
Our Theorem closely mirrors the statement in Theorem \ref{dimcares} in the projective space case. As in that case, the degrees of the relevant bigraded components are only functions of the multi-degree and the canonical class of the complete intersection.

From the proof of the above Theorem we can immediately obtain the following corollary.
\begin{corollary} Let $Z=Z_{d_1, \ldots, d_c}$ be a smooth complete intersection in a Grassmannian Gr(k,n), and let $\mU$ be the Griffiths ring attached to $Z$, and $m=\sum d_i.-n$. Then $$ \mU_{p, m} \oplus B_{N-c-p,p}\cong H^{N-c-p,p}_{\van}(Z) \oplus A_{N-c-p,p},$$ where $A_{N-c-p,p}, B_{N-c-p,p} $ depend only on the residual cohomology groups $H^i(G, \Omega^j_G(k))$ for appropriate values of $i,j,k$. 
\end{corollary}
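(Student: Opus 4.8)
The plan is to re-run, essentially verbatim, the diagram chase in the proof of Theorem~\ref{grci}, but without invoking the hypothesis $m\geq n-1$, and to record the cohomology groups that obstruct each isomorphism instead of discarding them. Recall that in that proof the chain
$$\mU_{p+1-c,m}\cong H^1(\zt,T_{\zt}\otimes\omega_{\zt}\otimes\mL^{p-1})\cong H^1(\zt,\Omega^{\hN-2}_{\zt}\otimes\mL^{p-1})\cong H^p_{\van}(\zt,\Omega^{\hN-1-p}_{\zt})$$
is built inductively from the residue and tangent--normal sequences \eqref{seq1} and \eqref{seq2}, whose connecting maps are isomorphisms exactly when the groups $H^i(Y,\Omega^k_Y\otimes\mL^{p-1})$ appearing in Lemma~4.9 of \cite{konno} vanish. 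When $m\geq n-1$, Theorem~\ref{snow}(I) forces all of them to vanish except the diagonal Hodge groups $H^{p,p}(Y)$, which were then handled by hand. Without the hypothesis on $m$, each step of the induction at which such a vanishing fails yields a four-term exact sequence rather than an isomorphism, and the defect is measured by one kernel and one cokernel.

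The first step is therefore to collect all of these residual kernels and cokernels. Running \eqref{seq1} and \eqref{seq2} from $k=\hN-1$ down to $k=\hN-1-p$, exactness at each level lets one push the obstruction onto a single side of the comparison; the accumulated defect on the $\zt$-side is a vector space $B_{N-c-p,p}$ and on the Hodge side a vector space $A_{N-c-p,p}$, so that, after the relabelling $p\mapsto p+1-c$ and the application of Proposition~\ref{konno} exactly as in Theorem~\ref{grci}, one obtains $\mU_{p,m}\oplus B_{N-c-p,p}\cong H^{N-c-p,p}_{\van}(Z)\oplus A_{N-c-p,p}$.

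The second step is to pin down what $A$ and $B$ can depend on. Every group entering this bookkeeping is of the form $H^i(Y,\Omega^k_Y\otimes\mL^j)$ or $H^i(Y,\Omega^k_{Y|\zt}\otimes\mL^j)$. Using \eqref{seq3} and \eqref{seq4} (equivalently the spectral sequence of Lemma~1.4 in \cite{konno}) these reduce to groups $H^q(Y,\pi^*\Omega^s_G\otimes\mL^t)$, and Lemma~\ref{pistar} rewrites the latter as $H^q(G,\Omega^s_G\otimes\Sym^t\E)$ or $H^{q-c+1}(G,\Omega^s_G\otimes\det\E^*\otimes\Sym^{-t-c}\E^*)$. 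Since $Z$ is a complete intersection, $\E=\bigoplus_i\of_G(d_i)$, so $\Sym^t\E$ and $\det\E^*\otimes\Sym^u\E^*$ are finite direct sums of line bundles $\of_G(\ell)$; hence every residual group, and therefore $A_{N-c-p,p}$ and $B_{N-c-p,p}$, is a direct sum of groups of the shape $H^i(G,\Omega^j_G(k))$ for values of $i,j,k$ determined by $p$, $c$ and the $d_i$.

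The main obstacle is purely organisational: one must check that the nested long exact sequences can be arranged so that the total obstruction really splits cleanly into one space on each side, rather than producing an extension that entangles the two. This is exactly the phenomenon already dispatched in the proof of Theorem~\ref{grci} for the single non-automatic pair $H^{p,p}(Y)$ (where the K\"unneth decomposition $I_{p,p-1}(Y)=I_{p,p-1}(G)$ was used), and that argument carries over unchanged; no geometric input beyond Lemma~\ref{pistar}, Proposition~\ref{konno} and Theorem~\ref{snow} is required.
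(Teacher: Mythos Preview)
Your proposal is correct and follows essentially the same approach as the paper. The paper states the corollary as an immediate consequence of the proof of Theorem~\ref{grci}, and your write-up unpacks precisely that: re-running the induction through \eqref{seq1}--\eqref{seq4} without the vanishing hypothesis, and then using Lemma~\ref{pistar} to reduce the residual terms on $Y$ to groups $H^i(G,\Omega^j_G(k))$.
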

We will analyse in full detail one example in which actually the residual contributes are not all zero, showing how it is possible to get explicit results without restriction on the degrees.

\subsection{A worked example: a linear section of the Grassmannian $\Gr(2,7)$}
The first example we want to describe in detail is the Calabi-Yau threefold $X_{1^7} \subset \Gr(2,7)$ already famous in literature, see for example \cite{rodland} or \cite{borisov}. In the cited paper, R\o dland computed its Hodge numbers. We compute the full Griffiths ring. Since its canonical class is trivial, Theorem \ref{grci} applies directly. In particular, its Griffiths ring contains the Hodge groups as special homogeneous slices, without any residual contribution from the ambient Grassmannian. Picking the following seven general equations 
\begin{align*}
f_1&=x_{1,2}+2 x_{2,6}+3 x_{3,5},\\
f_2&=x_{1,6}+4 x_{2,5}+5 x_{3,4},\\
f_3&=x_{1,5}+6 x_{2,4}+7 x_{6,7},\\
f_4&=x_{1,4}+8 x_{2,3}+9 x_{5,7},\\ 
f_5&=x_{1,3}+10 x_{4,7}+11 x_{5,6},\\ 
f_6&=x_{1,2}+1 2x_{3,7}+13 x_{4,6},\\
f_7&=x_{3,6}+x_{2,7}+x_{4,5}.
\end{align*}
Denote by $I$ the ideal generated by these seven equations in the coordinate ring $S$ of the Grassmannian Gr(2,7). One can check with a direct computation that the variety defined by this set of equations is smooth. Of course the choice of coefficients is not influential, provided that they are sufficiently general. The action of $\mathfrak{sl}_7$ on the coordinate ring of $X_{1^7}$ is generated by 48 homogeneous degree 1 equations that are easily written down. So, if as before we denote by $$F= \sum_{i=1}^7 y_i f_i,$$ where each $y_i$ has bi-degreee $(1,-1)$ we have that $$ \mU \cong S[y_1, \ldots, y_7]/ (D+F+I).$$
The ideal $D$ is generated by the induced $\mathfrak{sl}_7$ action on the ring $S[y_1, \ldots, y_7]$. We can easily compute the generators which are 
\begin{align*}
D^1_1(F)-D^2_2(F)&=-2x_{2,6}y_1-4x_{2,5}y_2+x_{1,6}y_2-6x_{2,4}y_3+x_{1,5}y_3-8x_{2,3}y_4+x_{1,4}y_4+x_{1,3}y_5,\\   &\vdots     \\  D^6_7(F)&=2 x_{2,7}y_1+x_{1,7}y_2+11 x_{5,7}y_5+13 x_{4,7}y_6.
\end{align*}We compute then the first (graded) components of the Griffiths ring. Their dimensions are
\begin{center}

\begin{tabular}{|c|c|c|c|c|c|c|c|c|}
\hline 
$a/b$ & -4 & -3 & -2 & -1 & 0 & 1 & 2 & 3 \\ 
\hline 
-1 & 0 & 0 & 0 & 0 & 0 & 0 & 0 & $\ldots$ \\ 
\hline 
0 & 0 & 0 & 0 & 0 & 1 & 14 & 70 &210 \\ 
\hline 
1 & 0 & 0 & 0 & 7 & 50 & 91 & 28& 0 \\ 
\hline 
2 & 0 & 0 & 28 & 84 & 51 & 7 & 0 & 0 \\ 
\hline 
3 & 0 & 84 & 77 & 14 & 1 & 0 & 0 & 0 \\ 
\hline 
4 & 210 & 21 & 0 & 0 & 0 & 0 & 0 & 0 \\ 
\hline 
\end{tabular} 
\end{center}

The (vertical) slice with $b=0$ corresponds to the Hodge groups of $X_{1^7}$. In fact, as predicted by Theorem \ref{grci} we have
\begin{align*}
H^{3,0}(X) &\cong \mU_{0,0} \cong \C,\\
H^{2,1}(X) &\cong \mU_{1,0} \cong \C^{50},\\
H^{1,2}(X)  \oplus I_{2,1} &\cong \mU_{2,0} \cong \C^{50}\oplus \C, \\
H^{0,3}(X) &\cong \mU_{3,0} \cong \C.
\end{align*}

\subsection{A worked example: Fano fourfold of degree 10}
We focus now on a smooth complete intersection $Z_{2,1} \subset \Gr(2,5)$. This is a linear section of the fivefold considered in the hypersurface section of this paper. Its Hodge numbers can be found for comparison in \cite{ilievmanivel2}. Again, the purpose of this example is to compute the full Griffiths ring, together with generators and relations. This 4-fold has dimension 4 and canonical class $\of_Z(-2)$. To $Z$ is associated the adjoint 6-fold hypersurface $\zt \subset\PP(\E)$, where $\E= \of_G(1) \oplus \of_G(2)$. Since the index of $Z$ is greater than one, Theorem \ref{grci} does not apply directly. We want to explicitly compute the residual contribute $A_{p, N-p-1}$ and give an explicit presentation for the Griffiths ring $\mU$ associated to $Z$. The main result here is the following:
\begin{proposition} Let $Z, \zt$ be as above. We have the following
\begin{itemize}
\item $H^0(\Omega^4_Z) \cong H^1(\Omega^5_{\zt})\cong \mU_{0,-2}$;
\item $H^1(\Omega^3_Z) \cong H^2(\Omega^4_{\zt}) \cong \mU_{1,-2}$;
\item $H^2_{\van}(\Omega^2_Z) \cong H^3_{\van}(\Omega^3_{\zt}) \cong \mU_{2,-2}/ V_5$;
\item $H^3(\Omega^1_Z) \cong H^4(\Omega^2_{\zt}) \cong \mU_{3,-2}$;
\item $H^4(\of_Z) \cong H^5(\Omega^1_{\zt}) \cong \mU_{4,-2}$.
\end{itemize}
\end{proposition}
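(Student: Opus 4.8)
The plan is to derive all five statements by combining Konno's Cayley-trick comparison (Proposition~\ref{konno}) with the diagram-chasing machinery that proves Theorem~\ref{grci}; the only genuinely new work is to pin down the residual term forced on us by the fact that here $m=\sum d_i-n=-2$ lies far below the threshold $m\geq n-1$ of that theorem.

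First I would run Proposition~\ref{konno} with $c=2$. It gives $H^4_{\van}(Z)(-1)\cong H^6_{\van}(\zt)$, hence, bidegree by bidegree, $H^q(\Omega^{4-q}_Z)_{\van}\cong H^{q+1}(\Omega^{5-q}_{\zt})_{\van}$ for $q=0,\dots,4$. For $q\neq 2$ these sit in off-diagonal Hodge bidegrees, where neither $G=\Gr(2,5)$ nor the $\PP^1$-bundle $Y=\PP(\E)$ carries cohomology, so by the Lefschetz hyperplane theorem vanishing cohomology is there the whole cohomology and the subscript ``$\van$'' can be dropped; this yields the first isomorphism in bullets (1), (2), (4), (5), and in (1) and (5) I would further note $H^0(\Omega^4_Z)=H^0(K_Z)=0$ because $Z$ is Fano. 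In the middle bidegree I would keep $H^{2,2}_{\van}(Z)\cong H^{3,3}_{\van}(\zt)$ and record that $H^{3,3}(\zt)=H^{3,3}(Y)|_{\zt}\oplus H^{3,3}_{\van}(\zt)$, with $h^{3,3}(Y)=h^{3,3}(G)+h^{2,2}(G)=4$.

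Next I would relate $H^{q+1}(\Omega^{5-q}_{\zt})$ to $\mU_{q,-2}$ exactly as in the proof of Theorem~\ref{grci}. By definition $\mU_{q,-2}=H^1(\zt,T_{\zt}\otimes\mL^{q-1}\otimes D^{-2})$, which using $\omega_{\zt}\cong\mL^{-1}\otimes D^{-2}$ and $T_{\zt}\otimes\omega_{\zt}\cong\Omega^{\hN-2}_{\zt}=\Omega^5_{\zt}$ becomes $H^1(\zt,\Omega^5_{\zt}\otimes\mL^{q})$; one then climbs, through $q$ applications of the pair \eqref{seq1}, \eqref{seq2}, to $H^{q+1}(\zt,\Omega^{5-q}_{\zt})$. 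The error terms of the climb are groups $H^i(Y,\Omega^k_Y\otimes\mL^j)$; with $m=-2$ these need not vanish, but, still following that proof, \eqref{seq3}, \eqref{seq4} and Lemma~\ref{pistar} reduce them to the groups $H^r(G,\Omega^s_G\otimes\Sym^t\E)$ and $H^r(G,\Omega^s_G\otimes\det\E^*\otimes\Sym^t\E^*)$, and since $\E=\of_G(1)\oplus\of_G(2)$ each of these is a finite sum of ordinary twisted-differential groups $H^r(G,\Omega^s_G(a))$ on $\Gr(2,5)$ with $a$ in an explicit range. A Borel--Bott--Weil check (Snow's Theorem~\ref{snow} suffices) then shows that for $q=0,1,3,4$ all error groups vanish, the climb is an isomorphism, and bullets (1), (2), (4), (5) are done.

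The hard part will be $q=2$. There the climb genuinely fails to be an isomorphism: several of the small-twist groups $H^r(G,\Omega^s_G(a))$ are nonzero, and the surviving contributions -- together with the copy of $H^{3,3}(Y)$ isolated in the first step -- must be shown to organize into a $5$-dimensional space $V_5$, giving $\mU_{2,-2}/V_5\cong H^{3,3}_{\van}(\zt)$. The delicate point is not an upper bound but the exact count: one has to verify that every connecting map occurring in \eqref{seq1}--\eqref{seq4} has the rank it ought to, so that no additional class survives the climb and none is accidentally annihilated. As an independent sanity check I would also compute $\mU_{2,-2}$ directly with the Macaulay2 routine used in the $\Gr(2,7)$ example, expecting $\dim\mU_{2,-2}=25$, and compare with the known vanishing Hodge number $h^{2,2}_{\van}(Z)=20$ of a Gushel--Mukai fourfold, the difference being precisely $\dim V_5=5$.
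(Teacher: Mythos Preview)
Your plan is the paper's plan: Konno's comparison for the $Z\leftrightarrow\zt$ part, then the inductive climb via \eqref{seq1}--\eqref{seq4} with the obstruction groups pushed down to $H^r(G,\Omega^s_G(a))$ by Lemma~\ref{pistar} and checked with Snow/Borel--Bott--Weil. For $q\neq 2$ the paper does exactly this (and in fact only writes out $q=1$, disposing of $q=0$ as trivially zero and invoking duality for $q=3,4$).

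The one place where your sketch should be sharpened is the bookkeeping at $q=2$. In the paper the residual $V_5$ is \emph{not} an aggregate ``together with the copy of $H^{3,3}(Y)$''; the two contributions live in different climb steps and never mix. The paper splits the climb into two lemmas. Lemma~\ref{lem1app} shows that the step $H^2(\Omega^4_{\zt}\otimes\mL)\to H^3(\Omega^3_{\zt})$ already lands on the \emph{vanishing} piece, because the cokernel in that exact sequence is exactly $H^4(\Omega^4_Y)$, which absorbs the ambient contribution via Lefschetz; no extra error appears here. Lemma~\ref{lem2app} then shows that the other step $H^1(\Omega^5_{\zt}\otimes\mL^2)\to H^2(\Omega^4_{\zt}\otimes\mL)$ has error precisely $V_5$, identified as $H^1(\mathcal M)/H^2(\mathcal M)$ for $\mathcal M=\pi^*\Omega^4_G\otimes\Omega^1_{Y/G}\otimes\mL^2$ and computed by Borel--Bott--Weil as the standard $\mathrm{SL}_5$-representation. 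So the $5$ you are looking for is produced entirely in one climb step, not by adding climb errors to $h^{3,3}(Y)$. Your numerical check $25-20=5$ is correct and agrees with the paper's table, but the mechanism should be disentangled along these lines.
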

\begin{proof}
The first isomorphism of any row follows from Proposition \ref{konno}. We will prove only the first 3 points, the other being analogous and following by duality. Moreover (1) is obvious, since all three terms are equal to zero. So we are left to prove part (2) and (3). We will divide the proof in three separate lemmata.
\begin{lemma} $H^1(T_{\zt} \otimes\omega_{\zt} \otimes \mL) \cong H^2(\Omega^4_{\zt})$.
\end{lemma}
\begin{proof}
We start with the observation that the above lemma proves point (2), since $$H^1(T_{\zt}\otimes \omega_{\zt} \otimes \mL) =: \mU_{1,-2}.$$\\
By tangent pairing, $$H^1(T_{\zt} \otimes\omega_{\zt} \otimes \mL) \cong H^1(\Omega^5_{\zt} \otimes \mL).$$ We use the sequence \eqref{seq1} with $k=5$ and $p=1$. In cohomology this becomes
$$0 \to H^1(\Omega^5_{Y|_{\zt}} \otimes \mL) \to H^1(\Omega^5_{\zt} \otimes \mL) \to H^2(\Omega^4_{\zt}) \to H^2(\Omega^5_{Y|_{\zt}} \otimes \mL) \to 0,$$
with the first and last zeroes given, respectively, by K\"unneth formula and by Akizuki-Kodaira-Nakano vanishing. Using the same arguments, from sequence \eqref{seq2} we immediately get $$ 0 \to H^1(\Omega^5_Y \otimes \mL) \to H^1(\Omega^5_{Y|_{\zt}} \otimes \mL) \to 0 $$ and $$ 0 \to H^2(\Omega^5_Y \otimes \mL) \to H^2(\Omega^5_{Y|_{\zt}} \otimes \mL) \to 0.$$
Consider now sequence \eqref{seq3}. Since the normal bundle to $\zt$ has rank 2, the relative cotangent bundle $\Omega^1_{Y/G}$ is a rank 1 bundle. Therefore the raised relative tangent sequence, when tensored with $\mL$ has a particularly simple form $$0 \to \pi^* \Omega^5_G\otimes \mL \to \Omega^5_Y \otimes \mL \to \pi^*\Omega^4_G  \otimes \Omega^1_{Y/G} \otimes \mL \to 0.$$
By Proposition \ref{konno}, $$H^i(Y, \pi^* \Omega^5_G \otimes \mL) \cong H^i(G,\Omega^5_G(1)) \oplus H^i(G,\Omega^5_G(2)).$$ These groups are all 0 for $i=1,2,3$ (see \cite{peternell}, Lemma 0.1). Therefore $$H^i(\Omega^5_Y \otimes \mL) \cong H^i(\pi^*\Omega^4_G  \otimes \Omega^1_{Y/G} \otimes \mL), \ \ i=1,2.$$
Finally, by sequence \eqref{seq4} $$ 0 \to \Omega^1_{Y/G} \otimes \mL \otimes \pi^*\Omega^4_G \to \pi^*(\Omega^4_G(2) \oplus \Omega^4_G(1)) \to \mL \otimes \pi^* \Omega^4_G \to 0.$$
Using Proposition \ref{konno}, Kodaira vanishing and the Peternell-Wisniewski Lemma we have $$H^j ( \pi^*(\Omega^4_G(2) \oplus \Omega^4_G(1)))=0, \ \ j=0,1,2$$ and $$H^l(\pi^*\Omega^4_G \otimes \mL)=0, \ \ l=1,2.$$ In particular, from all these vanishings $$H^1(\Omega^5_{Y|_{\zt}} \otimes \mL)=H^2(\Omega^5_{Y|_{\zt}} \otimes \mL)=0$$ and the result follows.
\end{proof}
To prove part (3) of the proposition, we need to combine the two following results.
\begin{lemma}\label{lem1app} $H^2(\Omega^4_{\zt} \otimes \mL) \cong H^3_{\van}(\Omega^3_{\zt})$
\end{lemma}
\begin{lemma} \label{lem2app}$H^1(\Omega^5_{\zt} \otimes \mL^2) \cong H^2(\Omega^4_{\zt} \otimes \mL) \oplus V_5$
\end{lemma}
The two Lemma above together prove the result, since $$\mU_{2,-2}= H^1(T_{\zt} \otimes \omega_{\zt} \otimes \mL^2)\cong H^2(\Omega^4_{\zt} \otimes \mL) \oplus V_5\cong H^3_{\van}(\Omega^3_{\zt}),$$ as required.
\begin{proof}[Proof of Lemma \ref{lem1app}]
We use the same tools of the previous Lemma. The first step is the reduction to $$ 0 \to H^2(\Omega^4_{Y|_{\zt}} \otimes \mL) \to H^2(\Omega^4_{\zt} \otimes \mL) \to H^3(\Omega^3_{\zt}) \to H^3(\Omega^4_{Y|_{\zt}} \otimes \mL) \to 0 $$
Then, since by K\"unneth formula $H^3(\Omega^4_Y)=0$ we consider the two induced sequences
$$ 0 \to H^2(\Omega^4_Y \otimes \mL) \to H^2(\Omega^4_{Y|_{\zt}} \otimes \mL) \to 0,$$
$$0 \to H^3(\Omega^4_Y \otimes \mL) \to H^3(\Omega^4_{Y|_{\zt}} \otimes \mL)\to H^4(\Omega^4_Y) \to 0.$$
From sequences \eqref{seq3}, \eqref{seq4} we get the vanishings of $H^2(\Omega^4_Y \otimes \mL)$ and $H^3(\Omega^4_Y \otimes \mL)$. This implies
$$ 0 \to H^2(\Omega^4_{\zt} \otimes \mL) \to H^3(\Omega^3_{\zt}) \to H^4(\Omega^4_Y) \to 0,$$
and therefore by definition and Lesfchetz hyperplane section theorem $$H^2(\Omega^4_{\zt} \otimes \mL) \cong H^3_{\van}(\Omega^3_{\zt}).$$
 
 The contribution of $H^4(\Omega^4_Y)$ can be easily computed from the K\"unneth formula: in fact  $$H^4(\Omega^4_Y) \cong H^4(Y, \C) \cong H^4(\Gr(2,5)) \otimes H^0(\PP^1) \oplus H^3(\Gr(2,5)) \otimes H^1(\PP^1) \oplus H^2(\Gr(2,5)) \otimes H^2 (\PP^1).$$
In particular, $H^3(\Gr(2,5)) \cong \C^3$ and $H^2(\Gr(2,5)) \cong \C^2$ and therefore $H^4(\Omega^4_Y) \cong \C^5$.

\end{proof}
\begin{proof}[Proof of Lemma \ref{lem2app}]
The first thing that we need to show is $H^1(\Omega^4_{\zt} \otimes \mL)=0$. By using sequences \eqref{seq1} and \eqref{seq2} this is equivalent to showing that $H^1(\Omega^4_Y \otimes \mL)=0$. This is implied by sequences \eqref{seq3}, \eqref{seq4} together with $H^0(\Omega^3_G(1))= H^0(\Omega^3_G(2))=0$, see \cite{peternell}. Therefore we have 
\begin{equation}\label{lemma34app}
0 \to H^1(\Omega^5_{Y|_{\zt}} \otimes \mL) \to H^1(\Omega^5_{\zt} \otimes \mL^2) \to H^2 (\Omega^4_{\zt}\otimes \mL) \to H^1(\Omega^5_{Y|_{\zt}} \otimes \mL) \to 0.
\end{equation}
On the other hand from the residue sequence \eqref{seq2} $$H^1(\Omega^5_{Y|_{\zt}} \otimes \mL) \cong H^1(\Omega^5_Y \otimes \mL^2), \ \ H^2(\Omega^5_{Y|_{\zt}} \otimes \mL) \cong H^2(\Omega^5_Y \otimes \mL^2).$$
Set $\mathcal{M}:=\pi^*\Omega^4_G \otimes \Omega^1_{Y/G} \otimes \mL^2$. We see from sequence \eqref{seq3} $$ H^1(\Omega^5_Y \otimes \mL^2)\cong H^1(\mathcal{M}), \ \  H^2(\Omega^5_Y \otimes \mL^2) \cong H^2(\mathcal{M}).$$
From \eqref{lemma34app} we have $$H^1(\Omega^5_{\zt} \otimes \mL^2) \cong H^2(\Omega^4_{\zt} \otimes \mL) \oplus H^1(\mathcal{M})/H^2(\mathcal{M}).$$
\cite[Lemma 1.5, ii]{konno} gives $H^0(\mathcal{M})=0$. By Borel-Bott-Weil $$H^0(\pi^*(\Omega^4_G(1)\oplus \Omega^4_G(2)) \otimes \mL)\cong H^0(\mL^2 \otimes \pi^* \Omega^4_G) \cong \mathcal{V}^{2},$$ with the latter denoting the unique irreducible SL(5)-module of highest weight -2. Moreover 
$$H^1(\pi^*(\Omega^4_G(1)\oplus \Omega^4_G(2)) \otimes \mL) \cong V_5 \oplus V_5 \cong \C^{10}$$ and  $$H^1(\pi^*\Omega^4_G \otimes \mL^2) \cong V_5.$$
Therefore by sequence \eqref{seq4} $H^1(\mathcal{M})/H^2(\mathcal{M}) \cong V_5$, proving the Lemma.
\end{proof}

\end{proof}
We now construct explicitly the Griffiths ring $\mU$. The ambient ring $S[y_1, y_2]$ is the Pl\"ucker ring already constructed in the previous section with the two new variables $y_1, y_2$ added. The variables $x_{i,j}$ have bi-degree $(0,1)$ while $y_1$ and $y_2$ have bi-degree (respectively) $(1,-1)$ and $(1,-2)$. As a quadric we choose the same one of the hypersurface case, that is $$f_2=x_{1,2}^2+2x_{1,3}^2+4x_{1,4}^2+5x_{1,5}^2+6x_{2,3}^2+11x_{2,4}^2+75x_{2,5}^2+13x_{3,4}^2+8x_{4,5}^2+43x_{3,5}^2$$
while as a linear equation we pick $$f_1=x_{1,2}+x_{3,4}.$$ We remark that the latter equation defines a smooth hypersurface of $\Gr(2,n)$ only when $ n\leq 5$. The 24 derivations are obtained easily  from the formula $\sum y_i D_x(f_i)$, given that we already know how each of the infinitesimal derivations in $D_x(f_i)$ acts from the hypersurface example.
For example  
\begin{align*}
 D^2_1(F)&=y_1(4x_{1,3}x_{2,3}+8x_{1,4}x_{2,4}+10x_{1,5}x_{2,5} ), \\    &\vdots                    \\    D^4_4(F)-D^5_5(F)&=y_1(8 x_{1,4}^2-10x_{1,5}^2+22x_{2,4}^2-150x_{2,5}^2+26x_{3,4}^2-86x_{3,5}^2)+y_0(x_{3,4})
\end{align*}
Denote by $D$ the ideal generated by all these derivations. We have $$\mU= S[y_1,y_2]/(D, F, f_1, f_2).$$ We compute some of the graded components of $\mU$
\begin{center}

\begin{tabular}{|c|c|c|c|c|c|c|c|c|}
\hline 
a/b & -4 & -3 & -2 & -1 & 0 & 1 & 2 & 3 \\ 
\hline 
-1 & 0 & 0 & 0 & 0 & 0 & 0 & 0 & $\ldots$ \\ 
\hline 
0 & 0 & 0 & 0 & 0 & 1 & 10 & 50 & $\ldots$ \\ 
\hline 
1 & 0 & 0 & 1 & 10 & 24 & 10 & 1 & 0 \\ 
\hline 
2 & 1 & 10 & 25 & 10 & 1 & 0 & 0 & 0 \\ 
\hline 
3 & 25 & 11 & 1 & 0 & 0 & 0 & 0 & 0 \\ 
\hline 
4 & 2 & 0 & 0 & 0 & 0 & 0 & 0 & 0 \\ 
\hline 
\end{tabular} 
\end{center}

In particular, $\mU_{1,0} \cong H^1(T_Z)$,  $\mU_{-1,-2}= H^{4,0}(Z)$, $\mU_{1, -2} \cong H^{3,1}(Z),$ $\mU_{2,2}= V_5 \oplus H^{2,2}_{\van}(Z)$, $\mU_{3,-2}= H^{1,3}(Z)$ and $\mU_{4,-2}=H^{0,4}(Z)$. The Hodge diamond of $Z=Z_{2,1}$ is then
\[ \begin{matrix}

&0 &&1 && 22 &&1 && 0 &\\
&& 0 && 0 && 0 &&0 \\
&&&0 &&1&&0&&&\\
&&&&0&&0&&&&\\
&&&&&1 &&&&&
\end{matrix}\]
and one can compare with the results in \cite{kuznetsovGM}.

\section{Appendix: Fano varieties of K3 type}
Recall the following definition (slightly adapted) from \cite{ilievmanivel}:
\begin{definition} Let $X$ be a smooth projective variety of dimension $n$, such that $h^{i,0}(X)=0$, $i <n$. Define $h:=\lfloor \frac{n-k}{2}\rfloor $. We say that $X$ is of \emph{weak $k$-Calabi-Yau type} if its middle dimensional Hodge structure is numerically similar to a Calabi-Yau $k$-fold, that is
$$h^{n-h,h  }=1,\ \   h^{n-h+j,h-j}=0, \ j\geq 1.$$
We say that $X$ is of \emph{strong $k$-Calabi-Yau type} (or simply of \emph{$k$-Calabi-Yau type}) if in addition the contraction with any generator $\omega \in H^ {n-h,h  }(X)$ induces an isomorphism $$ \omega: H^1(T_X) \longrightarrow H^{n-h-1,h+1}(X).$$
\end{definition}
The case of 3-Calabi-Yau is investigated in \cite{ilievmanivel}. We are particularly interested in the 2-Calabi-Yau case, that is, K3 type. Known examples of these varieties in the strong sense include a smooth cubic fourfold $X_3 \subset \PP^5$, a linear section $Y_1 \subset \Gr(3,10)$, cf. \cite{debarrevoisin}, and in the weak sense the already mentioned Gushel-Mukai fourfold and the c5- K\"uchle variety, cf. \cite{kuznetsovc5}. Some more examples are found if we allow mild singularities - e.g. cyclic quotient - see \cite{frz19}. Most of these examples are deeply linked  with hyperk\"ahler geometry and derived category problems. Moreover by \cite{kuznetsovmarkushevic} families of Fano of K3 type (FK3) are likely to be linked with projective families of irreducible holomorphic symplectic manifolds.\\
These families of FK3 necessarily have to be of dimension greater or equal than four and comparatively high index. This implies we have to apply Theorem \ref{grci} with caution, since there may be some residual contributions from the ambient space to take into account. However, there is some good news. Denote by $T= \C[ {x_I}, {y_i}]$ the basic ambient ring from which we build the Griffiths ring $\mU$, suitably bigraded as in \eqref{griffithsring1}. For a variety $X$ of dimension $2s$, a sub-structure of K3-type implies $h^{s+1,s-1}(X)=1$ and $h^{s+t, s-t}(X)=0$, for $t>1$. We therefore look at $\mU_{i, m}$, with $i \leq s-1$ having the above numerological properties. Since the relations in the Griffiths ring $\mU$ are all in bidegree $(0,1)$ and $(1,0)$, $m$ is negative and we have for $i$ in such a range that $T_{i,m}=\mU_{i,m}$. This reduces the problem into a combinatorial one.\\
Let in fact $X$ be a complete intersection of index $m$ in the Grassmannian $\Gr(k,l+k)$ given by the bundle $\mathcal{F}= \bigoplus \of_G(d_i)$. Denote by $\alpha=c_1(\mathcal{F})= \sum d_i$. A quick analysis of the polynomial ring $T$ reveals that in order to have $$T_{s-1,m}= \C, \ T_{s-t,m}=0$$ the weights must be ordered as $$d_1 > d_2 \geq \ldots \geq d_c$$ and moreover the following equation needs to be satisfied \begin{equation}\label{emagic}2(k+l-\alpha)=d_1(kl-c-2).
\end{equation}
A computer search confirms that only the already mentioned $X_{2,1} \subset \Gr(2,5)$ and $Y_1 \subset \Gr(3,10)$ satisfy this relation. They are the well known Gushel-Mukai fourfold and the Debarre-Voisin Fano 20-fold.\\
However, this does not rule out any other option. Thanks to the residual contributions from the Grassmannian there might be some $X_{d_1, \ldots, d_c}$ with $\mU_{s-1,m} \neq \C$ but still $h^{s-1,s+1}=1$. The condition on the ordering of the weights here might be not required.  This is particularly true in the case of linear sections. Indeed, after a first analysis on the cohomology groups of the ambient Grassmannian, we found another example as $ X_{1^4} \subset \Gr(2,8).$
This is a Fano 8-fold with middle Hodge structure of K3 type. We believe it could lead to a construction of a family of hyperk\"ahler varieties of K3$^{[n]}$ type. We compute its Hodge numbers as  
\begin{proposition}\label{hn} Let $X_{1,1,1,1} \subset \Gr(2,8)$ be given by a generic section of $\of_G(1)^{\oplus 4} $. The Hodge diamond of $X_{1,1,1,1}$ is

\begin{center}
\[\begin{matrix}
0 &&0 &&0&&1 && 22 &&1 &&0 &&0 &&0\\
&0 && 0 && 0 && 0&& 0 &&0 &&0 && 0&\\
&&0 && 0 && 0 &&2 &&0 &&0 && 0&&\\
&&&0&&0 & & 0 & &0 && 0 && 0 &&&\\
&&&&0 &&0 && 2 &&0 && 0 &&&&\\
&&&&& 0 && 0 && 0 &&0&&& \\
&&&&&&0 &&1&&0&&&&&&\\
&&&&&&&0&&0&&&&&&&\\
&&&&&&&&1 &&&&&&&&
\end{matrix}\]
\end{center}
with $h^{4,4}_{\van}(X)=19$.
\end{proposition}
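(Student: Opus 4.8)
\textbf{Proof proposal for Proposition \ref{hn}.}

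The plan is to use Theorem \ref{grci} together with the residual-cohomology analysis sketched in its proof, exactly as was done for the worked example $Z_{2,1}\subset\Gr(2,5)$. Here $X=X_{1^4}\subset\Gr(2,8)$ is a linear section, so $n=8$, $k=2$, $N=k(n-k)=12$, $c=4$, and $\dim X = N-c = 8$, with canonical degree $m=\sum d_i-n = 4-8 = -4$. Since $\dim X=8$ is even, Theorem \ref{grci} gives, whenever the ambient vanishings hold, $\mU_{p,m}\cong H^{N-c-p,p}_{\van}(Z)=H^{8-p,p}_{\van}(X)$; the failure of these vanishings contributes exactly the terms $A_{N-c-p,p}$ built from residual groups $H^i(G,\Omega^j_G(\ell))$. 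First I would set up the bigraded ambient ring $T=S[y_1,\dots,y_4]$ with $\deg x_I=(0,1)$, $\deg y_i=(1,-1)$, and observe (as in the K3-type discussion preceding the statement) that since $m=-4<0$ and all relations in $\mU$ sit in bidegrees $(0,1)$ and $(1,0)$, for the relevant small $p$ one has $T_{p,m}=\mU_{p,m}$, so the low-degree slices can be read off combinatorially.

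Next I would compute the individual Hodge numbers. The first column of the diamond ($h^{i,0}$) vanishes for $i<8$ by Lefschetz and the vanishings already used; $h^{0,8}=h^{8,0}=1$ follows from $\omega_X\cong\of_X(-4)$ having $H^0(K_X)$ dual to... wait, $m=-4$ means $X$ is Fano, so actually $h^{8,0}=h^0(K_X)=0$; the bottom vertex $1$ in the displayed diamond must instead record $h^{0,0}=1$ (the diamond as drawn is the full Hodge diamond, with the apex being $h^{8,8}$ or $h^{0,0}$). I would therefore carefully identify which $\mU_{p,m}$ computes which $H^{8-p,p}$: the middle Hodge structure ($p$ ranging over $0,\dots,8$ at weight $8$) is of K3 type, meaning $h^{5,3}=h^{3,5}=1$, $h^{4,4}_{\van}$ to be determined, and $h^{6,2}=h^{2,6}=2$, $h^{7,1}=h^{1,7}=0$, $h^{8,0}=0$, matching the displayed diamond. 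The off-diagonal entries $h^{6,2}=2$ and $h^{5,3}=1$ come from $\mU_{2,-4}$ and $\mU_{3,-4}$ respectively (after the index shift $p\mapsto p$ in Theorem \ref{grci}), while the primitive diagonal piece $h^{4,4}_{\van}$ comes from $\mU_{4,-4}$ after subtracting the residual contribution from $H^{4,4}(G)$ and any $I_{p,p-1}(G)$ terms. The total Hodge numbers of $X$ I would cross-check against the Euler characteristic computed from the normal bundle sequence $0\to T_X\to T_G|_X\to\of_X(1)^{\oplus4}\to 0$ and the known cohomology ring of $\Gr(2,8)$, e.g. via $\chi_{\mathrm{top}}(X)$ and the Hirzebruch–Riemann–Roch / Chern-class computation, which pins down $\sum h^{i,8-i}$ and hence $h^{4,4}_{\van}=19$.

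The key computation is the residual term at $p=4$ (the middle). Following the proof of Theorem \ref{grci}, the discrepancy between $\mU_{4,-4}$ and $H^{4,4}_{\van}(X)$ is governed by the groups $H^i(G,\Omega^j_G\otimes\Sym^t\E)$ and, via the Cayley/Konno machinery (Lemma \ref{pistar}, sequences \eqref{seq3} and \eqref{seq4}), by $H^*(G,\Omega^*_G(\ell))$ with small $\ell$ and by the cohomology ring $H^{*,*}(G)$ itself through the Künneth/Leray contributions $H^{4,4}(Y)$; since $\E=\of_G(1)^{\oplus4}$ and $\det\E^*=\of_G(-4)$, all the relevant twists are by $\of_G(\pm1),\of_G(\pm2),\of_G(\pm3),\of_G(\pm4)$ of bundles $\Omega^j_G$ on $\Gr(2,8)$, which are directly accessible by Bott's theorem / Snow's tables (Theorem \ref{snow}) and Peternell's Lemma 0.1. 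I would enumerate these, find that the only surviving residual contributions are the "expected" diagonal ones $H^{s,s}(G)$ for the appropriate $s$ together with the $I_{p,p-1}(G)$ correction in the odd-rank relative setting, and conclude that $h^{4,4}(X)$ equals $h^{4,4}(G)$ plus $h^{4,4}_{\van}(X)$, the latter equalling $19$. In practice I would also confirm the answer by a direct Macaulay2 computation of the bigraded Hilbert function of $\mU=S[y_1,\dots,y_4]/(F,f_1,\dots,f_4,\{D_{x_I}(F)\})$ for a generic choice of the four linear $f_i$ (checking smoothness via finite-dimensionality of $T^1$, as in the other examples), reading off the relevant $\mU_{p,-4}$, and subtracting the ambient $h^{p,p}(\Gr(2,8))$.

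The main obstacle I expect is precisely the bookkeeping of the residual cohomology at the middle degree $p=4$: unlike the $\Gr(2,5)$ example where only a single $H^4(\Omega^4_Y)\cong\C^5$ intervened, here on $\Gr(2,8)$ with a rank-4 bundle the spectral sequence of \eqref{seq3}–\eqref{seq4} has more terms, the relative cotangent $\Omega^{\bullet}_{Y/G}$ ranges over degrees $0,\dots,3$, and one must verify that the many potentially-nonzero groups $H^i(G,\Omega^j_G(\ell))$ that appear genuinely vanish in the relevant range or combine to give exactly $H^{4,4}(G)$ — any sign error or off-by-one in the Bott weight computation would corrupt $h^{4,4}_{\van}$. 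The safeguard is the independent topological computation of $\chi(X)$ and the total Betti numbers from the normal bundle sequence and the Schubert calculus on $\Gr(2,8)$, which I would run first to know the target value $19$ before doing the residue bookkeeping.
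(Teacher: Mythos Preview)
The paper gives no detailed proof of this proposition; it simply records the outcome of a computation using the Griffiths-ring machinery and the residual analysis developed earlier. Your overall strategy---build $\mU$ from $T=S[y_1,\dots,y_4]$, read off the slices $\mU_{p,-4}$, correct by the ambient residuals $H^i(G,\Omega^j_G(\ell))$ via Snow/Bott, and cross-check with $\chi_{\mathrm{top}}(X)$---is exactly the implicit method behind the paper's statement.

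However, there is a genuine misstep in your reading of the diamond and, correspondingly, in where you expect the residuals to sit. The entries you call $h^{6,2}=2$ are not in the middle row: the displayed matrix records $H^0,\dots,H^8$ from bottom to top, so the $2$'s you see are $h^{2,2}(X)=2$ and $h^{3,3}(X)=2$, coming straight from Lefschetz and $h^{p,p}(\Gr(2,8))$. In the actual middle row one has $h^{6,2}=h^{7,1}=h^{8,0}=0$. More importantly, your claim that $h^{5,3}=1$ ``comes from $\mU_{3,-4}$'' is wrong: with all $d_i=1$ every monomial in $T$ of bidegree $(b,a)$ has $a=(\text{$x$-degree})-b\ge -b$, so $T_{3,-4}=0$ and hence $\mU_{3,-4}=0$. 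The paper makes exactly this point in the paragraph preceding the proposition: the equal-weight case $d_1=\dots=d_c$ violates the ordering condition $d_1>d_2\ge\dots$, so $T_{s-1,m}\neq\C$, and the nonzero $h^{5,3}=1$ is produced \emph{entirely} by the residual ambient contribution from $\Gr(2,8)$. This is precisely why the example is singled out as exceptional. So the residual bookkeeping you flag as the main obstacle is needed not only at $p=4$ but already at $p=3$; if you only subtract residuals at the middle you will get $h^{5,3}=0$, contradicting the statement. Once you fix this and compute the residuals at $p=3$ and $p=4$ (together with $h^{4,4}(\Gr(2,8))=3$, giving $22-3=19$), your plan goes through.
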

 Notice that the projective dual of $\Gr(2,8)$ is a singular quartic hypersurface in $\PP^{27}$. Cutting the Grassmannian and the quartic with orthogonal linear subspaces we can link $X_{1,1,1,1} \subset \Gr(2,8)$ to a quartic K3 surface $S \subset \PP^3$. An embedding of the derived category of the quartic K3 inside the derived category of the above linear section is provided in \cite{segalthomas}, Thm 2.8.
However, we believe that this could be the only exception. Namely, we make the following
\begin{conj} Let $X=X_{d_1, \ldots, d_c} \subset \Gr(k,n)$ be a Fano smooth complete intersection of even dimension (that is not a cubic fourfold). Then $X$ is not of K3-type unless $$(\lbrace d_i \rbrace, k,n)=(\lbrace 2,1\rbrace,2,5), (\lbrace 1,1,1,1\rbrace,2,8), (\lbrace 1 \rbrace, 3,10).$$ 
\end{conj}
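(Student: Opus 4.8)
Because $k>1$ is assumed throughout, $X$ cannot be a cubic fourfold, so the parenthetical exclusion is automatic; and the three listed varieties are of K3-type --- for $X_{2,1}\subset\Gr(2,5)$ by the Hodge diamond computed above (Fano fourfold of degree $10$), for $Y_1\subset\Gr(3,10)$ by \cite{debarrevoisin}, for $X_{1,1,1,1}\subset\Gr(2,8)$ by Proposition \ref{hn} --- so the content is the converse. Put $m=\sum_i d_i-n<0$, $N=k(n-k)$, $2s=N-c$. By the Lefschetz hyperplane theorem and Hard Lefschetz, $h^{p,q}(X)=0$ for $p\ne q$ and $p+q\ne 2s$, while $H^{2s}(X)$ is the direct sum of $H^{2s}_{\van}(X)$ and the image of $H^{s,s}(\Gr(k,n))$, which is of type $(s,s)$. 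Hence $X$ is of weak K3-type if and only if $h^{s+1,s-1}_{\van}(X)=1$ and $h^{s+j,s-j}_{\van}(X)=0$ for all $j\ge 2$; strong K3-type imposes in addition the bijectivity of the multiplication $\mU_{1,0}\otimes\mU_{s-1,m}\to\mU_{s,m}$, handled by the same machinery. Ruling out \emph{weak} K3-type already suffices for the non-examples, so the focus is on these Hodge-number conditions.

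The plan is to read these conditions off the Griffiths ring via Theorem \ref{grci}. Its Corollary gives, for $p=s-j$ (so $N-c-p=s+j$), a numerical identity
$$
h^{s+j,s-j}_{\van}(X)+\dim A_{s+j,s-j}=\dim\mU_{s-j,m}+\dim B_{s+j,s-j},
$$
where the residual terms $A_\bullet,B_\bullet$ are assembled, through the sequences \eqref{seq1}--\eqref{seq4} and the projective-bundle spectral sequence used in that proof, from a finite explicit list of groups $H^i\big(\Gr(k,n),\Omega^l(t)\big)$ with $i,l,t$ explicit functions of $k,n,(d_i),j$; each one is evaluated by Borel--Bott--Weil (Theorem \ref{snow}). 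On the other hand, since the defining relations of $\mU$ lie in bidegrees $(1,0)$ and $(0,d_i)$ with $d_i\ge 1$ and $m<0$, for $p\le s-1$ the relation ideal misses the bidegree $(p,m)$, so $\mU_{p,m}$ is simply the corresponding graded piece of the ambient ring $S[y_1,\dots,y_c]$ --- a fully explicit count of Pl\"ucker monomials in the $x_I$ times monomials in the $y_i$. As in the appendix, when all residual terms vanish this count forces exactly the relation \eqref{emagic}, $2(k+l-\alpha)=d_1(kl-c-2)$ (with $l=n-k$, $\alpha=\sum_i d_i$), together with $d_1>d_2\ge\cdots\ge d_c$; in general \eqref{emagic} is replaced by finitely many corrected equalities obtained by subtracting the Borel--Bott--Weil contributions, and the case $X_{1,1,1,1}\subset\Gr(2,8)$ --- where $\mU_{s-1,m}=0$ and the generator of $H^{s+1,s-1}_{\van}$ is purely residual --- shows these corrections cannot be ignored.

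It then remains to show that the resulting Diophantine system has only the three solutions. The Fano hypothesis $\alpha\le n-1$ gives $-m=n-\alpha\ge 1$ and $c\le n-1$; in the clean form \eqref{emagic} reads $-m=(s-1)d_1$, and combined with $2s=k(n-k)-c$ and $k\ge 2$ it forces $n$, $c$ and all the $d_i$ to be bounded (for $k\ge 4$ one reaches a contradiction once $n$ is large, while $k\in\{2,3\}$ leaves a finite search). The corrected equalities of the residual case obey the same a priori bound, because the residual terms are governed by the comparatively rigid cohomology of $\Gr(k,n)$ and cannot absorb an unbounded excess of $\dim\mU_{s-1,m}$ over $1$. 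One is thus reduced to finitely many triples, each of which is settled by the explicit Griffiths-ring computation of the preceding sections (as in Proposition \ref{hn} and the worked examples), leaving exactly $(\{2,1\},2,5)$, $(\{1,1,1,1\},2,8)$ and $(\{1\},3,10)$.

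The hard part is to carry out the above sketch \emph{rigorously}: to give a closed-form expression for the residual terms $A_{s+j,s-j},B_{s+j,s-j}$ in every bidegree, and to prove --- not merely check by computer --- that the resulting system has no solutions beyond the three. What is presently missing is a uniform a priori bound on the residual groups $H^i(\Gr(k,n),\Omega^l(t))$, independent of $n$ and $c$; supplying such a bound is precisely what would upgrade the conjecture to a theorem.
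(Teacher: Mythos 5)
The statement you are addressing is a \emph{conjecture}: the paper offers no proof of it, only supporting evidence. Concretely, the paper derives the Diophantine condition \eqref{emagic} under the assumption that all residual contributions from the ambient Grassmannian vanish, reports a computer search showing that only $X_{2,1}\subset\Gr(2,5)$ and $Y_1\subset\Gr(3,10)$ satisfy it, then explicitly concedes that ``this does not rule out any other option'' because the residual groups $H^i(\Gr(k,n),\Omega^l(t))$ can create K3-type Hodge structures not visible in the clean count --- and indeed produces $X_{1^4}\subset\Gr(2,8)$ (Proposition \ref{hn}) in exactly this way. Your proposal follows the same route (Griffiths-ring slices via Theorem \ref{grci}, Borel--Bott--Weil for the residual terms, a Diophantine reduction), so there is no divergence of method to report; the question is whether you have closed the gap the authors left open. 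You have not, and to your credit you say so in your final paragraph.

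The two places where the argument genuinely fails to be a proof are these. First, the assertion that ``the residual terms are governed by the comparatively rigid cohomology of $\Gr(k,n)$ and cannot absorb an unbounded excess'' is precisely the missing content: the terms $A_{s+j,s-j}$, $B_{s+j,s-j}$ are built from groups $H^i(\Gr(k,n),\Omega^l(t))$ with $1\leq t\leq n-2$, for which Theorem \ref{snow} gives vanishing criteria but no uniform dimension bound, and the $\Gr(2,8)$ example shows that a residual term can supply the \emph{entire} class in $H^{s+1,s-1}$ while $\mU_{s-1,m}=0$; nothing you write excludes this happening for infinitely many $(k,n,\{d_i\})$. Second, even in the clean case the reduction $-m=(s-1)d_1$ together with the Fano bound $\alpha\leq n-1$ does not by itself yield finiteness for $k\in\{2,3\}$ (the inequality $k(n-k)\leq 3n-1$ it produces is vacuous there), and the paper's own evidence at that point is a computer search, which verifies rather than proves. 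So the proposal is a reasonable research plan, consonant with the paper's heuristic, but it is not a proof of the conjecture, and the paper contains none for you to be measured against.
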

Our method above can be partially extended to more general vector bundles on other homogeneous varieties. In \cite{eg2} we  analysed a handful more examples and study in details their geometric properties.

\end{document}